\newtheorem{assumption}{Assumption}
\newcommand{\R}[0]{\mathbb{R}}
\newcommand{\N}[0]{\mathbb{N}}
\newcommand{\supp}{\operatorname{supp}}
\newcommand{\dist}{\operatorname{dist}}
\newcommand{\diam}{\operatorname{diam}}
\newcommand{\Span}{\operatorname{span}}
\providecommand{\abs}[1]{\lvert#1\rvert}
\providecommand{\norm}[1]{\lVert#1\rVert}
\providecommand{\set}[1]{\{#1\}}
\renewcommand{\d}{\, \textup{d}}
\newcommand{\U}{\mathcal{U}}
\newcommand{\D}{\mathcal{D}}
\newcommand{\F}{\mathscr{F}}
\newcommand{\mtx}[1]{\mathbf{#1}}
\DeclareMathOperator{\Tr}{Tr}
\DeclareMathOperator{\HS}{HS}
\DeclareMathOperator*{\esssup}{ess\,sup}
\begin{document}

\title{Learning Green's functions associated with time-dependent partial differential equations}

\author{\name Nicolas Boull\'e \email boulle@maths.ox.ac.uk \\
       \addr Mathematical Institute\\
       University of Oxford\\
       Oxford, OX2 6GG, UK
       \AND
       \name Seick Kim \email kimseick@yonsei.ac.kr \\
       \addr Department of Mathematics\\
       Yonsei University\\
       Seoul, 03722, ROK
       \AND
       \name Tianyi Shi \email ts777@cornell.edu \\
       \addr Center for Applied Mathematics\\
       Cornell University\\
       Ithaca, NY 14853, USA
	   \AND
       \name Alex Townsend \email townsend@cornell.edu \\
       \addr Department of Mathematics\\
       Cornell University\\
       Ithaca, NY 14853, USA       
       }

\editor{Michael Mahoney}

\maketitle

\begin{abstract}
Neural operators are a popular technique in scientific machine learning to learn a mathematical model of the behavior of unknown physical systems from data. Neural operators are especially useful to learn solution operators associated with partial differential equations (PDEs) from pairs of forcing functions and solutions when numerical solvers are not available or the underlying physics is poorly understood. In this work, we attempt to provide theoretical foundations to understand the amount of training data needed to learn time-dependent PDEs. Given input-output pairs from a parabolic PDE in any spatial dimension $n\geq 1$, we derive the first theoretically rigorous scheme for learning the associated solution operator, which takes the form of a convolution with a Green's function $G$. Until now, rigorously learning Green's functions associated with time-dependent PDEs has been a major challenge in the field of scientific machine learning because $G$ may not be square-integrable when $n>1$, and time-dependent PDEs have transient dynamics. By combining the hierarchical low-rank structure of $G$ together with randomized numerical linear algebra, we construct an approximant to $G$ that achieves a relative error of $\smash{\mathcal{O}(\Gamma_\epsilon^{-1/2}\epsilon)}$ in the $L^1$-norm with high probability by using at most $\smash{\mathcal{O}(\epsilon^{-\frac{n+2}{2}}\log(1/\epsilon))}$ input-output training pairs, where $\Gamma_\epsilon$ is a measure of the quality of the training dataset for learning $G$, and $\epsilon>0$ is sufficiently small.
\end{abstract}

\begin{keywords}
Scientific machine learning, data-driven model, time-dependent PDE, Green's function
\end{keywords}

\section{Introduction}

Machine learning, numerical analysis, and scientific computing are successfully combining in the field of scientific machine learning to integrate data and prior knowledge of physical laws to solve inverse problems using deep learning~\citep{karniadakis2021physics}. The flexibility of neural network architectures and exceptional generalization errors, make neural networks ideal for scientific machine learning. On the other hand, it is challenging to mathematically justify the success of deep learning in this context.

A central topic in scientific machine learning is to discover partial differential equations (PDEs), which are mathematical models describing the relations between variables of a system and their spatial and temporal derivatives, directly from simulations or experimental data. This leads to a wide range of applications in weather forecasting and climate science~\citep{rasp2018deep,zanna2020data}, biology~\citep{alber2019integrating,raissi2020hidden}, and physics~\citep{karniadakis2021physics,kochkov2021machine,kutz2017deep,Chene2102721118}. Traditionally, PDEs are derived from mechanistic insights using conservation laws, minimum energy principles, or empirical observations~\citep{evans10}. With the rapid development of deep learning and the vast collection of experimental results from sensors, we are beginning an exciting new era of uncovering unknown PDE models directly from data. Still, learning time-dependent PDEs is challenging because of transient dynamics.

We aim to provide the first theoretical results to characterize how much training data is needed to learn a time-dependent PDE, and close a theoretical gap with recent data-driven methods~\citep{boulle2021data,gin2020deepgreen,li2020fourier,lu2021learning}. Our main result, described in \cref{sec_main_result}, exploits and draws connections with standard tools from numerical analysis, such as approximation theory and numerical linear algebra. While we exclusively focus on theory, the insights provided by this work will be of interest to a broader audience in scientific machine learning and motivate future empirical works and novel physics-informed neural network architectures.

\subsection{Parabolic Partial Differential Equations}

Throughout this paper, we consider a class of time-dependent PDEs called parabolic partial differential operators. A parabolic partial differential operator defined on a bounded spatial domain $\Omega\subset\mathbb{R}^n$ for some $n\geq 1$ with Lipschitz smooth boundary takes the form:
\begin{equation} \label{eq:parabolic}
\mathcal{P}u\coloneqq u_t -\nabla \cdot \left(A(x,t)\nabla u\right) = f(x,t),\quad x\in \Omega, \, t\in[0,T], \quad 0<T<\infty.
\end{equation}
Here, for every $x\in \Omega$ and $t\in[0,T]$, the matrix $A(x,t)\in\mathbb{R}^{n\times n}$ is symmetric positive definite with bounded coefficient functions in $L^\infty(\U)$, where $\U\coloneqq\Omega\times [0,T]$, and satisfies the uniform parabolicity condition (see~\cref{eq_uni_parab}). Here, $L^\infty(\U)$ is the space of measurable functions defined on $\U$ that have a bounded essential supremum. In this manuscript, we also consider two other $L^p$ spaces, the space of absolutely integrable functions, $L^1(\U)$, and squared-integrable functions, $L^2(\U)$. We emphasize that the regularity requirements on the parabolic PDE are very weak. The function $f$ in \cref{eq:parabolic} is called the forcing term of the PDE while $u$ is the corresponding system's response or solution. Parabolic PDEs model a wide variety of time-dependent phenomena, including heat conduction, particle diffusion, and option pricing. 

The goal of most PDE learning tasks is to learn the solution operator that maps forcing terms to responses, given training data $\{(f_j,u_j)\}_{j=1}^N$~\citep{boulle2021data,gin2020deepgreen,kovachki2021neural,li2020neural,li2020multipole,li2020fourier,lu2021learning,wang2021learning}. Associated with the parabolic operator $\mathcal{P}$ in~\cref{eq:parabolic} is a Green's function $G:\U\times \U\to \R^+$, which is a kernel for the solution operator~\citep{cho2012global}. In particular, the solution operator is an integral operator of the form~\citep{evans10}:
\[
u(x,t) = \int_0^T \int_\Omega G(x,t,y,s)f(y,s)\d y \d s, \quad (x,t)\in \U,
\]
where $u$ is the solution to~\cref{eq:parabolic} given the forcing term $f$. Our goal is to recover $G$ as accurately as possible from forcing functions $f_1,\ldots,f_N$ and their corresponding solutions $u_1,\ldots,u_N$, as well as the evaluation of the adjoint of $\mathcal{P}$. Since we are learning a classical mathematical object, we can gain a mechanistic understanding of the unknown parabolic PDE, and theoretical and practical performance guarantees.

\subsection{Challenges and Contributions} \label{sec_challenges}

In this paper, we derive a rigorous probabilistic algorithm to learn the Green's function $G$ associated with~\cref{eq:parabolic} from random input-output data $(f,u)$ and characterize the number of training pairs needed to learn $G$ to within a given tolerance $\epsilon$ with high probability. Since Green's functions associated with~\cref{eq:parabolic} may not be squared-integrable when $n>1$, we perform our analysis using the $L^1$-norm and obtain a rigorous learning rate for $G$ in that norm. We summarize the challenges that we face and our main contributions:

\paragraph{Low-rank structure of parabolic Green's functions.} 
It is known that Green's functions associated with elliptic PDEs in dimension $n\geq 3$ have a low-rank structure on well-separated domains and can be approximated by separable functions~\citep{bebendorf2003existence}. This property motivates the use of hierarchical matrices as a way to store, approximate, and compute the inverse of finite element stiffness matrices and discretized Green's functions in quasi-optimal complexity~\citep{bebendorf2003adaptive,bebendorf2008hierarchical,borm2003introduction,hackbusch1999sparse,hackbusch2000sparse,hackbusch2004hierarchical}.
The low-rank structure of the Green's function is heavily used in numerical solvers for elliptic PDEs, preconditioners for iterative solvers, computing Schur complements~\citep{bebendorf2003existence}, and rigorously learning Green's functions from input-output pairs~\citep{boulle2021learning}. While related works~\citep{greengard2000spectral,greengard1990fast,jiang2015efficient,li2007numerical,li2009high} exploited the compressibility of the heat kernel to build fast and accurate numerical methods for the evaluation of heat potentials, there is a lack of theoretical results that are analogous to those found in~\cite{bebendorf2003existence}. In particular, our first contribution is to prove that Green's functions associated with parabolic PDEs admit a low-rank structure on well-separated domains for any spatial dimension, extending~\citep{bebendorf2003existence}. Our analysis is based on the existence of Poincar\'e and Cacciopolli-type inequalities satisfied by the solutions of parabolic PDEs. We find that for the most efficient hierarchical partitioning of a domain, the time variable must be treated differently from spatial variables, leading to a careful hierarchical partition of the spatio-temporal domain. This result enables the approximation of the entire solution operator associated with a parabolic PDE by a hierarchical matrix, leading to efficient numerical solvers. Additionally, inspired by the hierarchical structures in elliptic PDEs, several neural network (NN) architectures using a wavelet transform are proposed to learn the solution operators of PDEs across different scales~\citep{feliu2020meta,gupta2021multiwaveletbased}, and our analysis suggests this is potentially a good idea for parabolic PDEs too.

\paragraph{Analysis in the $\mathbf{L^1}$-norm.} 
Green's functions associated with parabolic PDEs may not be squared-integrable when $n>1$, presenting an additional challenge compared to elliptic PDEs with $1\leq n\leq 3$. For example, consider the forced heat equation in dimension $n>1$ with zero homogeneous Dirichlet boundary conditions and zero initial conditions, i.e.,
\[\frac{\partial u}{\partial t} - \nabla^2 u = f(x,t), \quad u(x,0) = 0,\quad u(0,t) = 0, \quad (x,t)\in \R^n\times \R,\]
The associated Green's function is given by~\cite[Sec.~2.3.1]{evans10}
\begin{equation} \label{eq_heat_green}
G(x,t,y,s)=\frac{\Theta(t-s)}{(4\pi(t-s))^{n/2}}\exp\left(-\frac{1}{4}\frac{|x-y|^2}{t-s}\right),\quad (x,t)\neq (y,s)\in \R^n\times \R,
\end{equation}
where $\Theta(\cdot)$ is the Heaviside step function that takes the value of one for positive inputs and zero otherwise, and $|\cdot|$ is the Euclidean norm on $\Omega$. Due to the type of the time singularity along the diagonal as $s$ approaches $t$, $G$ is not a squared-integrable function. However, $G$ does have a bounded $L^1$-norm. This is a very significant theoretical challenge for rigorously learning the corresponding solution operator as $L^1$ is not a Hilbert space, contrary to $L^2$. Almost all the techniques employed in the elliptic case~\citep{boulle2021learning} exploit analogues of matrix results to Hilbert--Schmidt (HS) operators in infinite dimensions, such as the Eckart--Young--Mirsky theorem for best low-rank approximation in the Frobenius norm~\citep{eckart1936approximation}, and do not generalize to the $L^1$-norm. The best low-rank approximation problem for matrices in the entrywise $\ell_1$-norm is significantly more complicated than that in the Frobenius norm and is, in general, NP-hard~\citep{gillis2018complexity,song2017low}. We address this issue by approximating well-separated blocks of the Green's function in the $L^2$-norm, and then express the final approximation error in the $L^1$-norm using Moser's local maximum estimate~\citep{lieberman1996second}. This theory may motivate the use of NN architectures that allow for representing maps with singularities that are not square-integrable in deep learning, such as rational NNs~\citep{boulle2020rational}.

\paragraph{Quality of the training data.}
Several deep learning techniques for learning solution operators associated with PDEs assume random training and testing forcing functions $f$ in~\cref{eq:parabolic}, which are drawn from a Gaussian process (GP) $\mathcal{GP}(0,K)$ with a carefully designed covariance kernel $K$~\citep{boulle2021data,gin2020deepgreen,kovachki2021neural,li2020neural,li2020fourier,lu2021learning,wang2021learning}. For example, the GreenLearning, DeepGreen, and DeepONet methods use a squared-exponential kernel, i.e., $K(x,x')=\exp(-|x-x'|^2/(2\ell^2))$, where the length-scale parameter $\ell$ determines the smoothness of the forcing terms~\citep{boulle2021data,gin2020deepgreen,lu2021learning}. In contrast, the Neural Operator approach employs Green's functions associated with Helmholtz equations as covariance kernel, where the Helmholtz frequency varies depending on the problem considered~\citep{li2020neural,li2020fourier}. We emphasize that the choice of the covariance kernel is important in PDE learning applications and can be used to enforce prior knowledge about the PDE to obtain higher accuracy. Our main theoretical result contains a term that characterizes the quality of the random training data, i.e., the covariance kernel of the GP from which forcing terms are sampled (see~\cref{sec_main_result}). This is a step towards understanding the success of state-of-the-art PDE learning techniques and better determine the optimal covariance kernel to minimize the size of the training dataset.

Finally, we regard our work here as giving important theoretical insights and we are not proposing that our rigorous learning algorithm should replace deep learning techniques in practice. Instead, we hope this paper can benefit the state-of-the-art PDE learning techniques by suggesting different optimization algorithms based on an $L^1$ loss, improving the quality of training datasets, and designing ``physics-informed'' NN architectures that represent the singularity and low-rank structure present in Green's functions.

\subsection{Main Theoretical Results} \label{sec_main_result}

Our first result (see~\cref{thm01}) shows that Green's functions associated with parabolic operators of the form of \cref{eq:parabolic} satisfy similar separable approximation properties to Green's functions of elliptic operators~\citep[Thm.~2.8]{bebendorf2003existence} on admissible spatio-temporal domains $Q_X\times Q_Y\subset \U\times \U$. The notion of admissibility (see~\cref{sec_admissible}) ensures that the approximation results only apply to domains $Q_X\times Q_Y$ that do not contain the singular points of the Green's function. For any $0<\epsilon<1$ sufficiently small and $k \le k_\epsilon=\mathcal{O}(\lceil{\log\frac{1}{\epsilon}}\rceil^{n+3})$, we show that there exists a (low-rank) separable approximation of the form
\[G_k(x,t,y,s)=\sum_{i=1}^k u_i(x,t)  v_i(y,s),\quad (x,t)\in Q_X,\,(y,s)\in Q_Y,\]
such that
\[\norm{G-G_k}_{L^2(Q_X\times Q_Y)} \le \epsilon \norm{G}_{L^2(\hat{Q}_X\times Q_Y)}.\]
Here, $\hat{Q}_X\subset \U$ denotes a domain slightly larger than $Q_X$.

Throughout this paper, we make the following assumption that allows us to evaluate the adjoint of the parabolic operator to construct an approximant to the Green's function. In practical applications, it may not be possible to evaluate the adjoint, as backward parabolic equations are usually not well-posed~\citep{john1955numerical,miranker1961well}. However, numerical experiments suggest that deep NNs can approximate the solution operators associated with non-symmetric problems when the training data contains sufficient prior knowledge of the operator~\citep{boulle2021data,li2020fourier,lu2021learning}.

\begin{assumption} \label{ass_adjoint}
We assume that we can evaluate the action of the adjoint $\mathcal{P}^*$ of the parabolic operator $\mathcal{P}$, defined as
\[\mathcal{P}^* u = -u_t-\nabla\cdot(A(x,t)^\top\nabla u),\]
where $A^\top$ is the transpose of the coefficient matrix $A$.
\end{assumption}

Our main theoretical result, stated later in \cref{thm_learning_rate}, constructs a rigorous probabilistic scheme for learning Green's functions of parabolic operators in spatial dimension $n\geq 1$ within a relative error of $\mathcal{O}(\Gamma_\epsilon^{-1/2}\epsilon)$, with high probability, using at most $\mathcal{O}(\epsilon^{-\frac{n+2}{2}}\log(1/\epsilon))$ input-output training pairs, where $\epsilon>0$ is a sufficiently small parameter. The factor $\Gamma_\epsilon$ is defined later in \cref{sec:green_entire} and quantifies the quality of the forcing terms for learning $G$. This result provides an upper bound for the intrinsic learning rate of parabolic operators. Our theoretical construction relies on the separable approximation result for Green's functions associated with parabolic PDEs described earlier, a careful hierarchical partition of the spatio-temporal domain into well-separated blocks, and the randomized singular value decomposition (SVD) for HS operators~\citep{boulle2021learning,boulle2022generalization}.

\subsection{Related Works}

The approaches that dominate the PDE learning literature consist of discovering coefficients of the PDE~\citep{Brunton,Rudy,udrescu2020ai,udrescu2020ai2,zhang2018robust}, building reduced-order models to significantly speed up standard numerical solvers~\citep{qian2020lift,qian2021reduced}, and directly approximating the PDE solution operator by an artificial NN~\citep{boulle2021data,gin2020deepgreen,kovachki2021neural,li2020neural,li2020multipole,li2020fourier,lu2021learning,wang2021learning}. Several black-box deep learning techniques are proposed to approximate the solution operator, which maps forcing terms $f$ to observations of the associated system's response $u$ such that $\mathcal{P}(u) = f$, where $\mathcal{P}$ is the partial differential operator. These methods mainly differ in their choice of the NN architecture used to approximate the solution map. For example, Fourier Neural Operator~\citep{li2020fourier} uses a Fourier transform at each layer, DeepONet~\citep{lu2021learning} contains a concatenation of `trunk' and `branch' networks to enforce additional structure, and GreenLearning~\citep{boulle2021data} relies on rational NNs~\citep{boulle2020rational} to learn Green's functions.

On the theoretical side, most of the research has focused on the approximation theory of infinite-dimensional operators by NNs, such as the generalization of the universal approximation theorem~\citep{cybenko1989approximation} to shallow and deep NNs~\citep{chen1995universal,lu2021learning} as well as error estimates for Fourier Neural Operators and DeepONets with respect to the network width and depth~\citep{kovachki2021neural,kovachki2021universal,lanthaler2021error}. Other approaches aim to approximate the matrix of the discretized Green's functions associated with elliptic PDEs from matrix-vector multiplications by exploiting sparsity patterns or hierarchical structure of the matrix~\citep{lin2011fast,schafer2021sparse}. In addition,~\cite{de2021convergence} derived convergence rates for learning linear self-adjoint operators based on the assumption that the target operator is diagonal in the basis of the Gaussian prior. More recently and closely related to this work,~\cite{boulle2021learning} derived an intrinsic ``learning rate'' for elliptic PDEs using ideas from randomized linear algebra and low-rank approximation theory to characterize the number of training data needed to approximate the associated solution operator or Green's functions.

However, fundamental challenges of the field concern the interpretability of the discovered model to uncover new physical understanding, and performance guarantees with theoretical results. These are challenging, especially when the underlying mathematical model is time-dependent and has short-lived transient dynamics.

\subsection{Organization of the Paper}

The paper is organized as follows. We first introduce some definitions and our notation in \cref{sec_background}. Then, we prove a low-rank approximation property of Green's functions associated with parabolic operators in \cref{sec_low_rank} on separable domains. We exploit this low rank structure to bound the number of input-output pairs needed to learn Green's functions in \cref{sec_learning_rate} using the randomized SVD combined with a hierarchical partition of the temporal domain. We conclude in \cref{sec_conclusions} with a discussion of the results and future challenges.

\section{Background and the Randomized Singular Value Decomposition} \label{sec_background}

This section introduces our notation and background on low-rank functions, admissible domains, and the randomized SVD for HS operators.

\subsection{Definitions and Notation} \label{sec:notations}

Throughout this paper, $\Omega\subset\R^n$ denotes a bounded domain in spatial dimension $n\geq 1$ satisfying the \emph{uniform interior cone condition}~\citep[Chapt.~7.7]{gilbarg2001elliptic}, which is defined as follows. 
\begin{definition}[Uniform interior cone condition] \label{def_uniform_cone}
We say that $\Omega$ satisfies an interior cone condition  if there exists an angle $\theta\in(0,\pi/2)$ and a radius $r>0$ such that for every $x \in \Omega$ there exists a unit vector $\xi_x$ such that the cone 
\[
C(x,\xi_x, \theta,r) = \left\{x+\lambda y : y\in\mathbb{R}^n,\, \|y\|_2=1,\,y\cdot\xi_x \geq \cos(\theta),\, \lambda\in[0,r]\right\}
\] 
is contained in $\Omega$. Here, `$\cdot$' denotes the standard dot product in $\mathbb{R}^n$. 
\end{definition}
\noindent We note that every bounded domain with a Lipschitz smooth boundary satisfies an interior cone condition.

We consider parabolic PDEs of the form~\cref{eq:parabolic} on the domain $\U=\Omega\times [0,T]$, where $T>0$. We also assume that the symmetric coefficient matrix $A(x,t)\in \R^{n\times n}$ satisfies the uniform parabolicity condition, i.e.,~there exist two positive constants $\lambda,\Lambda>0$ such that
\begin{equation} \label{eq_uni_parab}
\lambda \abs{\xi}^2 \le A(x,t) \xi \cdot \xi \le \Lambda \abs{\xi}^2, \quad \xi\in \R^n,
\end{equation}
where $\abs{\cdot}$ denotes the discrete $\ell_2$-norm. This means that the matrix $A$ is uniformly positive definite with eigenvalues in the interval $[\lambda,\Lambda]$ By the Cauchy--Schwarz inequality, we have the following inequality: 
\[\abs{A(x,t) \xi \cdot \psi} \le \Lambda \abs{\xi} \abs{\psi}, \quad \xi,\psi\in \R^n.\]
Under these conditions, we can find a nonnegative Green's function $G(x,t,y,s)$ defined on the domain $\{(x,t,y,s)\in\U\times\U,\,(x,t)\neq (y,s)\}$ by the following relationship~\citep{cho2012global}:
\begin{subequations}
\begin{alignat*}{3}
\mathcal{P}G(x,t,y,s) &= \delta(y-x)\delta(s-t), \qquad &&(x,t)\in \U, \\
G(\cdot, t, y, s) &= 0, &&\text{on } \partial\Omega, \, t\in (0,T),\\
G(\cdot, 0, y,s) &= 0, &&\text{in } \Omega,
\end{alignat*}
\end{subequations}
where $\mathcal{P}$ is the parabolic operator defined in \cref{eq:parabolic} acting on functions in the variables $(x,t)$ and $\delta(\cdot)$ is the Kronecker delta function.

In this manuscript, we usually work on domains of the form $Q=(\Omega \cap D) \times I$, where $D$ is a bounded convex domain in $\R^n$ and $I$ is an open bounded interval of $\R^+$, and consider the following function spaces: 
\begin{enumerate}
\item The Banach space $L^1(Q)$, with norm $\|u\|_{L^1(Q)}=\int_Q |u|\d x\d t$.
\item The Hilbert space $L^{2}(Q)$, with inner product $\langle u,v\rangle_{L^2(Q)}=\int_{Q} uv\d x\d t$.
\item The Hilbert space $W^{1,0}_2(Q)$, with inner product $\langle u,v\rangle_{W^{1,0}_2(Q)}=\int_{Q} (uv+\nabla u \cdot \nabla v)\d x \d t$, consisting of all functions $u\in L^2(\U)$ with squared-integrable weak derivatives.
\item The Banach space $V_2(Q)$, defined as
\[V_2(Q) \coloneqq \left\{u \in W^{1,0}_2(Q),\,  \norm{u}_{V_2(Q)} = \esssup_{t\in I}\, \norm{u(\cdot, t)}_{L^2(\Omega \cap D)} + \norm{\nabla u}_{L^2(Q)} < \infty\right \}.\]
\end{enumerate}
The approximation error between the learned and exact Green's functions are expressed in the $L^1(\U\times \U)$-norm as Green's functions associated with parabolic PDEs are usually not squared-integrable when $n>1$ (see~\cref{sec_challenges}).

\subsection{Admissible Domains and Low-rank Functions} \label{sec_admissible}
We learn Green's functions on subdomains of $\U\times \U$ satisfying an admissibility condition so that the subdomains do not contain the singular points of the Green's functions located along the diagonal $(x,t) = (y,s)$. While the definition of strong admissible (or well-separated) domains is standard for Green's functions associated with elliptic PDEs~\citep{ballani2016matrices,bebendorf2003existence,bebendorf2008hierarchical,hackbusch2015hierarchical}, we need to adapt the definition for parabolic PDEs slightly.
Let $\beta>0$ be a constant, and consider the following metrics on $\U\times \U$:
\begin{equation} \label{eq_def_beta}
m(x,t,y,s)=\max \left(\norm{x-y}_\infty,\sqrt{\abs{t-s}/\beta}\right),\quad (x,t)\in\U,\, (y,s)\in \U,
\end{equation}
where the spatial and temporal variables are treated differently. The choice of the metric $m$ is related to the exponential term appearing in the Green's function of the heat equation (cf.~\cref{eq_heat_green}) since Green's functions associated with parabolic PDEs satisfy similar Gaussian bounds~\citep{cho2012global}.  Let $Q_X,Q_Y\subset \U$ be two non-empty domains, we can define the \emph{diameter} of $Q_X$ and the \emph{distance} between $Q_X$ and $Q_Y$ using the metric $m$ as
\begin{equation} \label{eq_diam_dom}
\diam Q_X = \!\! \sup_{(x,t),(y,s)\in Q_X} \!\! m(x,t,y,s),\qquad \dist(Q_X,Q_Y) = \!\! \inf_{(x,t)\in Q_X,(y,s)\in Q_Y} \!\!m(x,t,y,s).
\end{equation}
We use these quantities to define a partition of $\U\times\U$ so that the spatial and temporal domains scale similarly as we approach the singularity of the Green's function (see~\cref{sec_hier}).  Finally, one can combine the notions of diameter and distance for spatio-temporal domains to introduce an admissibility condition, similar to the elliptic case.
 
\begin{definition}[Admissible domains] \label{def_adm_domains}
For a fixed parameter $\rho>0$, we say that two non-empty domains $Q_X,Q_Y\subset \U$ are admissible if
\begin{equation} \label{eq_adm_domain}
\dist(Q_X,Q_Y)\geq \rho \max\{\diam Q_X,\diam Q_Y\}.
\end{equation}
Otherwise, we say that they are non-admissible.
\end{definition}

\cref{fig_admissible_domains} illustrates admissible and non-admissible subdomains of $\U=[0,1]\times[0,1]$. In particular, the spatial component of $\U$ is partitioned into four subdomains while the temporal component is partitioned into two. This ensures that all the subdomains have a similar diameter according to \cref{eq_diam_dom}. We use a similar strategy when constructing a partition of $\U\times\U$ into admissible and non-admissible subdomains (see~\cref{sec_hier}).

\begin{figure}[htbp]
\vspace{0.2cm}
\centering
\begin{overpic}[width=0.9\linewidth]{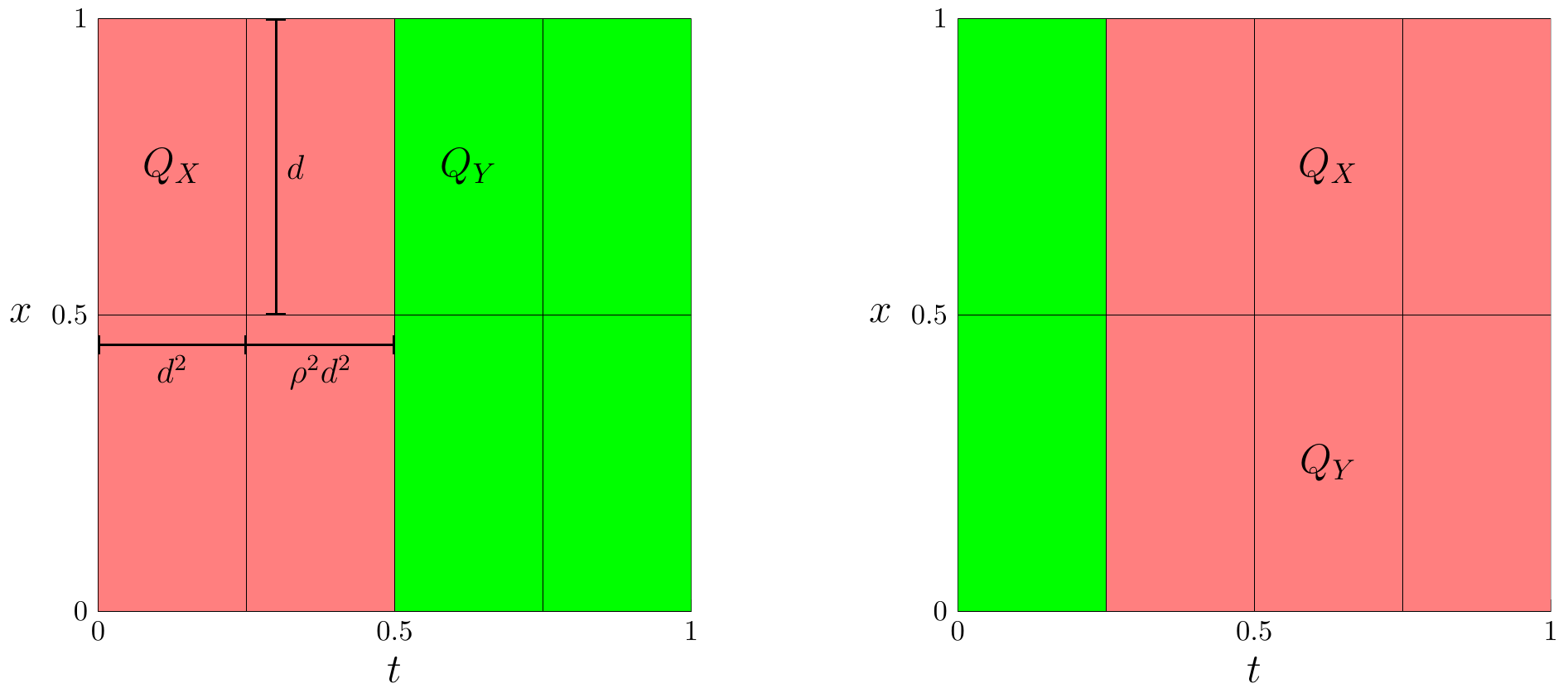}
\put(-1,42){(a)}
\put(54,42){(b)}
\end{overpic}
\caption{Admissible and non-admissible subdomains of the spatial-temporal domain $\U=[0,1]\times[0,1]$. The spatial domain is partitioned into two while the temporal domain is partitioned into four. In particular, a subdomain $Q_X=D_X\times I_X$ has diameter $d\coloneqq\diam(Q_X) = \diam(D_X)=\diam(I_X)^{1/2}$. Panels (a) and (b) highlights in green (resp.~red) the admissible domains (resp.~non-admissible) with $Q_X$ with diameter $d=1/2$ and $\rho=\beta=1$ (see~\cref{eq_def_beta,def_adm_domains}). Specifically, the subdomain $Q_Y$ is admissible with $Q_X$ in (a) and non-admissible in (b).}
\label{fig_admissible_domains}
\end{figure}

On admissible domains $Q_X\times Q_Y\subset\U\times\U$, we aim to construct approximants to the Green's function $G$ associated with \cref{eq:parabolic}. For a given accuracy $0<\epsilon<1$, we say that $G$ is of rank $k$ if there exists an integer $k=k(\epsilon)$ and a separable approximation of the form
\[G_k(x,t,y,s) = \sum_{i=1}^k u_i(x,t)v_i(y,s),\quad (x,t,y,s)\in Q_X\times Q_Y,\]
such that $\|G-G_k\|_{L^2(Q_X\times Q_Y)}\leq \epsilon \|G\|_{L^2(\hat{Q}_X\times \hat{Q}_Y)}$, where $\hat{Q}_X$ (resp.~$\hat{Q}_Y$) denotes a domain slightly larger than $Q_X$ (resp.~$Q_Y$); see \cref{thm01} for a precise definition. When $k=\mathcal{O}(\log^{\delta}(1/\epsilon))$ for some small $\delta\in \N$ as $\epsilon\rightarrow 0$, then we say that $G$ has rapidly decaying singular values on $Q_X\times Q_Y$.

\subsection{Randomized Singular Value Decomposition for Hilbert--Schmidt Operators} \label{sec:rand_svd}

Let $D_1,D_2\subset\R^n$ be two domains, a linear operator $\F:L^2(D_1)\to L^2(D_2)$ is called an HS operator if it has finite HS norm, i.e.,
\[\|\F\|_{\HS}\coloneqq \left(\sum_{j=1}^\infty\|\F e_j\|_{L^2(D_2)}^2\right)^{1/2}<\infty,\]
where $\{e_j\}_{j=1}^\infty$ is an orthonormal basis of $L^2(D_1)$. HS operators generalize the notion of matrices acting on vectors to infinite dimensions with operators acting on squared-integrable functions. Moreover, the HS norm is the continuous analogue of the Frobenius norm for matrices and $\|\F\|_{\HS}=(\sum_{j=1}^\infty\sigma_j^2)^{1/2}$, where $\sigma_1\geq \sigma_2\geq \cdots\geq 0$ denote the singular values of $\F$.

The randomized SVD is one of the most popular algorithms for constructing low-rank approximations of large matrices. Given a matrix $A$, it uses matrix-vector products with random vectors drawn from a standard Gaussian distribution to find an approximate orthonormal basis $Q$ for the column space of $A$ before computing a low-rank approximation as $QQ^\top A$~\citep{halko2011finding,martinsson2020randomized}. 

A recent generalization of the randomized SVD with random vectors drawn from a general multivariate Gaussian distribution allows its application to learn HS operators using random functions drawn from a Gaussian process~\citep{boulle2021learning,boulle2022generalization}. The randomized SVD for HS operators uses random functions drawn from a Gaussian process $\mathcal{GP}(0,K)$ with mean $(0,\ldots,0)$ and continuous symmetric positive definite covariance kernel $K:D_1\times D_1\to \R$ to construct a low-rank approximant. The kernel $K$ has positive eigenvalues $\lambda_1\geq \lambda_2\geq\cdots>0$, and there exists an orthonormal basis of $L^2(D_1)$ such that~\citep[Thm.~4.6.5]{hsing2015theoretical}
\[K(x,y) = \sum_{j=1}^\infty\lambda_j \psi_j(x)\psi_j(y), \quad x,y\in D_1.\]
The trace of the covariance kernel is defined as $\Tr(K)\coloneqq \sum_{j=1}^\infty\lambda_j<\infty$ and is finite as $K$ is continuous on $D_1\times D_1$. A random function $\omega\sim \mathcal{GP}(0,K)$ can be sampled by setting
$\smash{\omega\sim \sum_{j=1}^\infty\sqrt{\lambda_j}c_j\psi_j}$, where $c_j\sim\mathcal{N}(0,1)$ are independent and identically distributed.

Finally, it is convenient to introduce quasimatrices, which extends the notion of tall-skinny matrices to infinite dimensions~\citep{townsend2015continuous}, to formulate the randomized SVD for HS operators. Let $k\geq 1$ be an integer, a $D_1\times k$ quasimatrix $\mtx{\Omega}=\begin{bmatrix}
\omega_1 & \cdots & \omega_k
\end{bmatrix}$ is a matrix with $k$ columns, where each column $1\leq j\leq k$ is a squared-integrable function $w_j\in L^2(D_1)$. The standard matrix-vector operations generalize naturally to the applications of HS operators to quasimatrices. Then, $\F\mtx{\Omega}$ denotes the quasimatrix obtained by applying $\F$ to each column of $\mtx{\Omega}$~\citep[Sec.~2.1]{boulle2021learning}.

We can now state the results of approximating an HS operator with randomized SVD. Let $k\geq 1$ be a target rank, $p\geq 4$ an oversampling parameter, and $\mtx{\Omega}$ be a $D_1\times (k+p)$ quasimatrix, whose columns are drawn from $\mathcal{GP}(0,K)$. If $\mtx{Y}=\F\mtx{\Omega}$ and $\mtx{P}_\mtx{Y}$ is the orthogonal projection onto the vector space spanned by the columns of $\mtx{Y}$, then for $s,t\geq 1$, we have~\citep[Thm.~1]{boulle2021learning},
\begin{equation}
\label{eq:MainProbabilityBound}
\|\F-\mtx{P}_{\mtx{Y}}\F\|_{\HS}\leq \sqrt{1+ t^2s^2 \frac{3}{\gamma_k}\frac{k(k+p)}{p+1}\frac{\Tr(K)}{\lambda_1}}\,\left(\sum_{j=k+1}^\infty\sigma_j^2\right)^{1/2},
\end{equation}
with probability of failure bounded by $t^{-p} + [s e^{-(s^2-1)/2}]^{k+p}$. Here, $\gamma_k = k/(\lambda_1\Tr(\mtx{C}^{-1}))$ with $\mtx{C}_{ij}=\int_{D_1\times D_1}v_i(x)K(x,y)v_j(y)\d x\d y$ for $1\leq i,j\leq k$, where $v_j$ is the $j$th right singular vector of $\F$. The factor $0<\gamma_k\leq 1$ in \cref{eq:MainProbabilityBound} characterizes the quality of the covariance kernel to learn the HS operator $\F$. A refined bound shows that one can enforce prior information on the operator in the covariance kernel to obtain near-best approximation error~\citep[Thm.~2]{boulle2022generalization}. 

In the remainder of this manuscript, we apply the randomized SVD for HS operators to learn Green's functions associated with parabolic PDEs on admissible domains $Q_X\times Q_Y$. Green's functions restricted to admissible domains are an example of HS integral operators as the solution operator $\F$ associated with parabolic PDEs can be written as
\[
(\F f)(x,t)=\int_{Q_Y}G(x,t,y,s)f(y,s) \d y \d s,\quad f\in L^2(Q_X),\, (x,t)\in Q_X.
\]
Moreover, we can use the relation $\|\F\|_{\HS}=\|G\|_{L^2(Q_X\times Q_Y)}$ to estimate the approximation error between the Green's function and its approximant on $Q_X\times Q_Y$.

\section{Low-Rank Approximation of Parabolic Green's Functions} \label{sec_low_rank}

\cite{bebendorf2003existence} show that Green's functions associated with elliptic equations in three dimensions admit a low rank separable approximation on admissible domains. In this section, we extend this result to Green's functions associated with parabolic PDEs so that we obtain low-rank approximants on well-separated domains (see~\cref{thm01}). In particular, approximations in this section are expressed in $L^2$-norm, and we convert the relations to $L^1$-norm in the next section.  

\subsection{Poincar\'e-type Inequality}
We start our derivation by showing a Poincar\'e-type inequality for the solution of a parabolic equation~\cref{eq:parabolic}, which bounds a function by its derivatives and the geometry of its domain. The standard Poincar\'e's inequality is of the form $\norm{u-\bar u}_{L^2(D)} \le C \norm{\nabla u}_{L^2(D)}$, where $\bar u$ is the average of $u$ in $D$, and $C$ is some positive constant. In~\cref{eq1419w} we find that when $D$ is convex, a closed-form expression for the constant $C$ can be derived. 

\begin{lemma} \label{lem01}
Let $D$ be a bounded convex domain in $\R^n$ and let $u \in W^{1,1}(D) = \{f \in L^1(D)  :  \partial_x f \in L^1(D) \}$ where $\partial_x$ is taken in the weak sense. Let $\eta$ be a nonnegative function such that $\int_D \eta\d y >0$ and $0 \le \eta(y) \le 1$. Then, for $x\in D$, we have
\begin{equation}				\label{eq1418w}
\abs{u(x)-\bar{u}_\eta} \le \frac{d^n}{n \int_D \eta\d y} \int_D \abs{x-y}^{1-n}\, \abs{\nabla u(y)}\d y,
\end{equation}
where $\bar{u}_\eta= \int_D u(y) \eta(y) \d y/\int_D \eta(y)\d y$ and $d=\diam D.$
In particular, we have
\begin{equation}				\label{eq1419w}
\norm{u-\bar{u}_\eta}_{L^2(D)} \le \frac{\omega_n^{1-\frac{1}{n}} \abs{D}^{\frac1n}}{\int_D \eta\d y} d^n\, \norm{\nabla u}_{L^2(D)},
\end{equation}
where $\omega_n$ denotes the volume of the unit ball in $\R^n$.
\end{lemma}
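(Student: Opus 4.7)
The plan is to establish the pointwise estimate \cref{eq1418w} first by integrating $\nabla u$ along rays emanating from the base point $x$, then to obtain the $L^2$ estimate \cref{eq1419w} by combining the pointwise bound with a weighted Cauchy--Schwarz argument and a radial rearrangement observation for the kernel $|x-y|^{1-n}$.

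For the pointwise inequality, I would first use a standard mollification so we may assume $u$ is smooth; convexity of $D$ guarantees that for any $x,y\in D$ the segment joining them lies in $D$, so
\[
u(x)-u(y)=-\int_0^{|x-y|}\nabla u(x+r\omega)\cdot\omega\,\mathrm{d}r,\qquad \omega=\tfrac{y-x}{|y-x|}.
\]
Multiplying by $\eta(y)$ and integrating in $y$ over $D$, one gets
\[
\Bigl(\textstyle\int_D\eta\,\mathrm{d}y\Bigr)\,u(x)-\int_D u\eta\,\mathrm{d}y
=-\int_D\eta(y)\int_0^{|x-y|}\nabla u(x+r\omega)\cdot\omega\,\mathrm{d}r\,\mathrm{d}y.
\]
Passing to polar coordinates centered at $x$, writing $y=x+\rho\omega$ with $\mathrm{d}y=\rho^{n-1}\mathrm{d}\rho\,\mathrm{d}S(\omega)$, and swapping the order of integration in $r$ and $\rho$, the inner $\rho$-integral becomes $\int_r^{R(\omega)}\eta(x+\rho\omega)\rho^{n-1}\mathrm{d}\rho\le \tfrac{d^n}{n}$, where $R(\omega)$ is the distance from $x$ to $\partial D$ in the direction $\omega$ and we used $0\le\eta\le 1$ together with $R(\omega)\le d$. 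Reversing the polar change of variables on the remaining $r$-integral, $r^{n-1}\mathrm{d}r\,\mathrm{d}S(\omega)=\mathrm{d}y$, and dividing by $\int_D\eta\,\mathrm{d}y$ yields \cref{eq1418w}.

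For \cref{eq1419w}, write $C_\eta = \frac{d^n}{n\int_D\eta\,\mathrm{d}y}$ and apply weighted Cauchy--Schwarz to the pointwise bound,
\[
|u(x)-\bar{u}_\eta|^2 \le C_\eta^2\Bigl(\textstyle\int_D |x-y|^{1-n}\,\mathrm{d}y\Bigr)\Bigl(\textstyle\int_D |x-y|^{1-n}|\nabla u(y)|^2\,\mathrm{d}y\Bigr).
\]
The crucial observation is that since $|z|^{1-n}$ is radially decreasing, comparing $D$ with the ball $B_R(x)$ of the same volume as $D$, namely $R=(|D|/\omega_n)^{1/n}$, gives
\[
\int_D |x-y|^{1-n}\,\mathrm{d}y\;\le\;\int_{B_R(x)} |z|^{1-n}\,\mathrm{d}z\;=\;n\omega_n R\;=\;n\,\omega_n^{1-1/n}|D|^{1/n},
\]
uniformly in $x\in D$. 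Integrating the squared pointwise bound over $x\in D$, using Fubini on the double integral $\int_D\int_D |x-y|^{1-n}|\nabla u(y)|^2\,\mathrm{d}y\,\mathrm{d}x$, and applying the same kernel estimate in the $x$-variable, we obtain
\[
\|u-\bar{u}_\eta\|_{L^2(D)}^2 \le C_\eta^2\bigl(n\,\omega_n^{1-1/n}|D|^{1/n}\bigr)^2\|\nabla u\|_{L^2(D)}^2,
\]
which after taking square roots and substituting $C_\eta$ produces exactly \cref{eq1419w}.

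The main obstacle is not the pointwise step, which is essentially the classical Morrey/Poincaré line-integration argument, but rather obtaining the sharp geometric constant $\omega_n^{1-1/n}|D|^{1/n}$ in \cref{eq1419w}. A naive ball bound using only $|x-y|\le d$ would yield $n\omega_n d$ instead, which is worse for thin or elongated convex sets. The rearrangement comparison with the equivolumetric ball is what produces the desired dependence on $|D|^{1/n}$ and will be used later so that the constant degrades optimally in both the diameter and the measure of $D$.
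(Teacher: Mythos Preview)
Your proof is correct and follows essentially the same approach as the paper: the pointwise bound \cref{eq1418w} is obtained by the identical line-integration and polar-coordinate argument (the paper's bookkeeping with the auxiliary function $V$ is equivalent to your Fubini swap of $r$ and $\rho$), and for \cref{eq1419w} the paper simply cites \cite[Lem.~7.12]{gilbarg2001elliptic} with $p=q=2$, $\mu=1/n$, whose proof is exactly the weighted Cauchy--Schwarz plus equimeasurable-ball rearrangement you carry out explicitly.
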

\begin{proof}
From the Fundamental Theorem of Calculus,
\[u(x)-u(y)=-\int_0^{\abs{x-y}} \partial_r u(x+r\omega)\d r,\quad \omega=\frac{y-x}{\abs{y-x}}, \quad x,y\in D.\]
By multiplying by $\eta(y)$ on both sides and integrating with respect to $y$ over $D$, we obtain
\[(u(x)-\bar{u}_\eta) \int_D \eta(y)\d y=-\int_D \eta(y)\int_0^{\abs{x-y}} \partial_r u(x+r\omega)\d r \d y.\]
For $x\in \R^n$, we define the function
\[ V(x+r\omega) = \begin{cases} 
      \abs{\partial_r u(x+r\omega)}, & \text{if } x+r\omega \in D,\\
      0, & \text{otherwise}.
   \end{cases}
\]
Then, we have
\begin{align*}
\abs{u(x)-\bar{u}_\eta} &\le \frac{1}{\int_D \eta\d y} \int_D \int_0^\infty V(x+r\omega) \d r  \d y\\
&= \frac{1}{\int_D \eta\d y} \int_0^\infty \int_{\abs{\omega}=1} \int_0^d V(x+r\omega)\rho^{n-1} \d\rho \d\omega \d r \\
&= \frac{d^n}{n \int_D \eta\d y} \int_0^\infty \int_{\abs{\omega}=1} V(x+r\omega) \d\omega \d r = \frac{d^n}{n \int_D \eta\d y} \int_D \abs{x-y}^{1-n}\, \abs{\nabla u(y)}\d y.
\end{align*}
Note that we have used the fact that $\eta\leq 1$ to obtain the first inequality. Finally, \cref{eq1419w} is obtained by applying~\citep[Lem.~7.12]{gilbarg2001elliptic} to \cref{eq1418w}, with $p=q=2$ and $\mu = 1/n$.
\end{proof}
It is worth noting the differences between the standard Poincar\'{e} inequality and the one in~\cref{lem01} as the average of $u$ is replaced by a ``weighted'' average of $u$. This is important for deriving~\cref{lem02}.

The importance of this lemma is that one can bound the $L^2$-norm of $u$ minus some constant by the $L^2$ norm of the spatial gradient (not the full gradient) of $u$ when $u$ is a solution of $\mathcal{P}u=0$. It is complementary to the parabolic Caccioppoli's inequality (see~\cref{lem03}), where the spatial gradient of $u$ is controlled by $u$ itself.
\begin{lemma} \label{lem02}
Let $\Omega \subset \R^n$ be a domain and $D\subset \R^n$ be a bounded convex set such that $\Omega \cap D \neq \emptyset$.
Suppose there is a constant $\theta \in (0,1)$ such that one of the following holds:
\begin{enumerate}
\item $\abs{D\setminus \Omega} \ge \theta \abs{D}$.
\item There exists a ball $B \subset \Omega \cap D$ such that $\abs{B} \ge \theta \abs{D}$.
\end{enumerate}
Then, for any $u$ satisfying $\mathcal{P}u=0$ in $Q=(\Omega \cap D) \times I$, where $I$ is an open bounded interval of $\R^+$, and $u=0$ on $(\partial \Omega \cap D) \times I$, there exists a constant $c\in\R$ such that
\begin{equation} \label{eq1619w}
\norm{u-c}_{L^2(Q)} \le \left( \frac{2^{2n}}{\theta^2} \left(\frac{\omega_n}{\abs{D}} \right)^{2-\frac2n} d^{2n} +    \frac{2^{2n+3}\Lambda^2 \omega_n^{\frac2n} \abs{I}^2}{\theta^{2+\frac2n} \abs{D}^{\frac2n}} \right)^{1/2}\norm{\nabla u}_{L^2(Q)},
\end{equation}
where $d$ and $\omega_n$ are defined in \cref{lem01}, and $\Lambda$ is a constant related to uniform parabolicity defined in \cref{eq_uni_parab}.
\end{lemma}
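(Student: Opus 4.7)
The plan is to apply the weighted spatial Poincar\'e inequality of \cref{lem01} at each fixed time $t \in I$, with a weight function $\eta$ tailored to each of the two hypotheses, and then to control the remaining temporal oscillation of the weighted spatial average $\bar u_\eta(t)$ using the parabolic equation $u_t = \nabla\cdot(A\nabla u)$.

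In case~(1), where $\abs{D \setminus \Omega} \geq \theta \abs{D}$, I would take $\eta = \mathbf{1}_{D \setminus \Omega}$ and extend $u$ by zero outside $\Omega \cap D$ using the boundary condition $u = 0$ on $(\partial\Omega\cap D)\times I$. Then $\bar u_\eta(t) = 0$ for every $t$, so setting $c = 0$ reduces the claim to integrating the $L^2(D)$ bound from \cref{lem01} (with $\int_D \eta \geq \theta\abs{D}$) in time; this yields only the first term in \cref{eq1619w}, with a constant of the form $\theta^{-2}(\omega_n/\abs{D})^{2-2/n}d^{2n}$ that is already majorized by the stated bound.

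In case~(2), where $\abs{B} \geq \theta\abs{D}$ for some ball $B \subset \Omega\cap D$, I would take $\eta$ to be a smooth cutoff with $0 \leq \eta \leq 1$, equal to $1$ on the concentric ball of half the radius of $B$ and supported in $B$, with $\abs{\nabla\eta} \lesssim 1/r$ where $r$ is the radius of $B$. Now $\bar u_\eta(t)$ need not vanish, so I would set $c = \abs{I}^{-1}\!\int_I \bar u_\eta(t)\d t$ and use the decomposition
\[
\norm{u-c}_{L^2(Q)}^2 \leq 2\int_I \norm{u(\cdot,t)-\bar u_\eta(t)}_{L^2(\Omega\cap D)}^2 \d t \;+\; 2\abs{D}\int_I \abs{\bar u_\eta(t) - c}^2 \d t.
\]
The first term is handled by \cref{lem01}; the factor-of-$2^n$ loss in $\int_D \eta$ (caused by shrinking the support on which $\eta\equiv 1$) produces the $2^{2n}$ in \cref{eq1619w}. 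For the second term I would use $u_t = \nabla\cdot(A\nabla u)$ and integrate by parts, which is valid because $\eta$ is compactly supported in $B$, obtaining
\[
\partial_t \bar u_\eta(t) \;=\; -\frac{1}{\int_D \eta\d y}\int_D A(y,t)\nabla u(y,t)\cdot \nabla \eta(y)\d y.
\]
Cauchy--Schwarz together with uniform parabolicity and the size estimates $\int_D \eta \gtrsim \theta\abs{D}/2^n$ and $\norm{\nabla\eta}_{L^2}^2 \lesssim \omega_n^{2/n}\abs{B}^{1-2/n} \leq \omega_n^{2/n}\theta^{-2/n}\abs{D}^{1-2/n}$ then bounds $\abs{\partial_t \bar u_\eta(t)}^2$ by a constant of the type appearing in \cref{eq1619w} times $\norm{\nabla u(\cdot,t)}_{L^2}^2$. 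A one-dimensional Poincar\'e (Wirtinger) inequality on the interval $I$ applied to $t\mapsto \bar u_\eta(t)$ (whose mean is $c$ by construction) then bounds $\norm{\bar u_\eta - c}_{L^2(I)}^2$ by a multiple of $\abs{I}^2\,\norm{\partial_t \bar u_\eta}_{L^2(I)}^2$, which produces the $\abs{I}^2$ factor in the second term.

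The main obstacle will be case~(2), specifically keeping the constants sharp enough to reach the precise bound \cref{eq1619w}: smoothness of $\eta$ is essential for the integration-by-parts step yet degrades the Poincar\'e constant from \cref{lem01} by a factor $2^n$, and turning $\abs{B}^{1-2/n}$ into $\abs{D}^{1-2/n}$ costs the extra $\theta^{-2/n}$ visible in the $\theta^{2+2/n}$ denominator of the second term. The conceptual idea is straightforward---slicewise spatial Poincar\'e plus a temporal Poincar\'e on the averaged trajectory, with the PDE serving as the bridge between the two via integration by parts---but the arithmetic accounting of the prefactors is where care is required.
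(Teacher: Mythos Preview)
Your proposal is correct and follows the paper's proof almost exactly: the same case split, the same extension by zero in case~(1) with $\eta=\mathbf 1_{D\setminus\Omega}$ and $c=0$, and in case~(2) the same smooth cutoff $\eta$ supported in $B$, the same splitting $u-c=(u-\bar u_\eta(t))+(\bar u_\eta(t)-c)$ with $c$ the time average of $\bar u_\eta$, and the same appeal to \cref{lem01} for the first piece (with the $2^n$ loss in $\int_D\eta$).

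The only genuine difference is how you control the temporal piece $\int_I|\bar u_\eta(t)-c|^2\d t$. You bound $|\partial_t\bar u_\eta(t)|$ pointwise in $t$ via Cauchy--Schwarz in space and then apply a one-dimensional Wirtinger inequality on $I$. The paper instead integrates the equation from $t_0$ to $t_1$ to obtain a \emph{uniform} oscillation bound
\[
|\bar u_\eta(t_1)-\bar u_\eta(t_0)|\;\lesssim\;\int_{Q}|\nabla u|\,\d x\,\d t,
\]
writes $\bar u_\eta(t)-c=|I|^{-1}\int_I(\bar u_\eta(t)-\bar u_\eta(s))\d s$, bounds this by the uniform oscillation, and only then applies Cauchy--Schwarz to convert $\|\nabla u\|_{L^1(Q)}$ into $\|\nabla u\|_{L^2(Q)}$. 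Both routes land on the same constant; your Wirtinger argument is arguably cleaner, but it implicitly uses $\bar u_\eta\in W^{1,2}(I)$, which is true (the weak formulation shows the distributional derivative equals $-(\int_D\eta)^{-1}\int_D A\nabla u\cdot\nabla\eta\in L^2(I)$) yet deserves a sentence of justification, whereas the paper's difference-based argument sidesteps this entirely.
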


\begin{proof}
We denote $\tilde{Q}=D \times I$ and extend $u$ by zero on $\tilde Q \setminus Q$ so that $u$ is defined on $\tilde{Q}$. The proof is done in two steps assuming one of the conditions.

\begin{enumerate}[wide,labelwidth=0pt]
\item We first consider the case when $\abs{D\setminus \Omega} \ge \theta \abs{D}$. If $\eta$ is the indicator function of $D\setminus \Omega$, then \cref{lem01} yields
\[\int_{D} \abs{u(x,t)}^2 \d x \le \frac{\omega_n^{2-\frac{2}{n}} \abs{D}^{\frac2n}}{\abs{D\setminus \Omega}^2} d^{2n}\, \int_{D} \abs{\nabla u(x,t)}^2\d x,\quad t \in I.\]
By integrating with respect to $t \in I$ and using $\abs{D\setminus \Omega} \geq \theta \abs{D}$, we find that
\begin{equation} \label{eq_ineq_lemma_3}
\int_I \int_{D} \abs{u(x,t)}^2 \d x \d t \leq \frac{\omega_n^{2-\frac{2}{n}} \abs{D}^{\frac2n}}{\theta^2\abs{D}^2} d^{2n}\int_I \int_{D} \abs{\nabla u(x,t)}^2\d x\d t.
\end{equation}
Since $u=0$ in $\tilde Q \setminus Q$, \eqref{eq_ineq_lemma_3} implies \cref{eq1619w} with $c=0$.

\item Next, we consider the case when there exists a ball $B=B(x_0,r) \subset \Omega \cap D$ such that $\abs{B} \ge \theta \abs{D}$. Let $\eta$ be a smooth function such that
\begin{equation}				\label{eq1057th}
0\le \eta \le 1, \quad\supp \eta \subset B,\quad \abs{\nabla \eta} \le \frac{2}{r},\quad\text{and}\quad \int_B \eta(x)\d x \ge \frac{1}{2^n} \abs{B}.
\end{equation}
We denote
\[
\bar u_\eta(t)\coloneqq\frac{1}{\int_D \eta\d x} \int_D u(x,t)\eta(x)\d x,\quad \hat u_\eta \coloneqq \frac{1}{\abs{I}} \int_I \bar u_\eta(t)\d t=\frac{1}{\int_{\tilde Q} \eta\d x\d t}\int_{\tilde Q} u(x,t)\eta(x)\d x\d t.
\]
Then, we have
\begin{multline}	 \label{eq1741sat}
\int_{\tilde Q}\, \abs{u(x,t)-\hat u_\eta}^2\d x\d t  \le 2 \int_{\tilde Q} \,\abs{u(x,t)-\bar u_\eta(t)}^2 + \abs{\bar u_\eta(t)-\hat u_\eta}^2\d x\d t\\
\le \frac{2^{2n}}{\theta^2} \left(\frac{\omega_n}{\abs{D}} \right)^{2-\frac2n} d^{2n} \int_I \int_D \abs{\nabla u(x,t)}^2\d x\d t+ 2\abs{D} \int_I \abs{\bar u_\eta(t)-\hat u_\eta}^2\d t,
\end{multline}
where the second inequality follows from \cref{eq1419w,eq1057th}, and the assumption that $\abs{B} \ge \theta \abs{D}$.

We now bound the second term in the right-hand side of \cref{eq1741sat}. Since $\mathcal{P}u=\partial_t u-\nabla\cdot(A\nabla u)=0$ in $Q=(\Omega \cap D) \times I$, $u=0$ on $(\partial \Omega \cap D) \times I$, and $\supp \eta \subset B \subset \Omega\cap D$, we multiply the equation $\mathcal{P}u=0$ by $\eta$ and integrate by parts over $\Omega \cap D$ to obtain
\[
\int_{\Omega\cap D} \frac{\partial}{\partial t} u(x,t) \eta(x)\d x+ \int_{\Omega \cap D} A(x,t) \nabla u(x,t) \cdot \nabla \eta(x)\d x=0.
\]
Then, integrating over $t \in [t_0, t_1]\subset I$ yields
\begin{equation}\label{eq1115th}
\begin{aligned}
\left|\int_{t_0}^{t_1}\left(\frac{d}{dt}\int_{\Omega \cap D}  u(x,t) \eta(x)\d x\right)\d t\right| &= \left|\int_{t_0}^{t_1}\int_{\Omega \cap D} A(x,t) \nabla u(x,t) \cdot \nabla \eta(x) \d x\d t\right| \\
 &\le \int_{t_0}^{t_1}\int_{\Omega \cap D} \abs{A(x,t) \nabla u(x,t) \cdot \nabla \eta(x)} \d x\d t\\
&\le \frac{2 \Lambda}{r} \int_{Q} \abs{\nabla u}\d x\d t= 2 \Lambda \left(\frac{\omega_n}{\abs{B}}\right)^{\frac{1}{n}}  \int_{\tilde Q} \abs{\nabla u}\d x\d t,
\end{aligned}
\end{equation}
where the second inequality follows from \cref{eq1057th} and the uniform parabolicity condition. We now use the fact that $\int_B \eta\d x \ge \frac{1}{2^n} \abs{B}$ to obtain
\[\left|\int_{t_0}^{t_1}\left(\frac{d}{dt}\int_{\Omega \cap D}  u(x,t) \eta(x)\d x\right)\d t\right|= \left|\int_B\eta\d x\int_{t_0}^{t_1}\frac{d}{dt}\bar{u}_\eta(t)\d t\right|\ge \frac{\abs{B}}{2^n}\abs{\bar u_\eta(t_1)-\bar u_\eta(t_0)},\]
which when combined with \cref{eq1115th}, gives the following inequality:
\[ \frac{\abs{B}}{2^n} \abs{\bar u_\eta(t_1)-\bar u_\eta(t_0)} \le 2 \Lambda \left(\frac{\omega_n}{\abs{B}}\right)^{\frac{1}{n}}  \int_{\tilde Q} \abs{\nabla u}\d x\d t. \]
Therefore, we have
\begin{equation}			\label{eq1743sat}
\abs{\bar u_\eta(t_1)-\bar u_\eta(t_0)} \le \frac{2^{n+1}\Lambda \omega_n^{\frac1n}}{\abs{B}^{1+\frac1n}} \int_{\tilde Q} \abs{\nabla u} \d x\d t\le \frac{2^{n+1}\Lambda \omega_n^{\frac1n}}{\theta^{1+\frac1n}\abs{D}^{1+\frac1n}} \int_{\tilde Q} \abs{\nabla u}\d x\d t,
\end{equation}
as $|B|\geq \theta |D|$. Moreover, we have,
\[\bar u_\eta(t)-\hat u_\eta= \frac{1}{\abs{I}} \int_{I}\bar u_\eta(t)- \bar u_\eta(s) \d s,\quad t\in I\]
and, using \cref{eq1743sat}, we deduce that
\[2\abs{D} \int_I \abs{\bar u_\eta(t)-\hat u_\eta}^2\d t\leq  2 \abs{\tilde Q} \left(\frac{2^{n+1}\Lambda \omega_n^{\frac1n}}{\theta^{1+\frac1n}\abs{D}^{1+\frac1n}} \int_{\tilde Q} \abs{\nabla u}\d x \d t \right)^2.\]
We combine this equation with \cref{eq1741sat} to obtain
\[\int_{\tilde Q} \abs{u-\bar u_\eta}^2 \d x \d t \le \left( \frac{2^{2n}}{\theta^2} \left(\frac{\omega_n}{\abs{D}} \right)^{2-\frac2n} d^{2n} +    \frac{2^{2n+3}\Lambda^2 \omega_n^{\frac2n} \abs{I}^2}{\theta^{2+\frac2n} \abs{D}^{\frac2n}} \right) \int_{\tilde Q} \abs{\nabla u}^2\d x \d t,\]
and choose the constant $c=\bar{u}_\eta$ to conclude the proof of \cref{eq1619w}.
\end{enumerate}
\end{proof}

If the diameter $d$ of the spatial domain $D$ satisfies $d \simeq \abs{I}^{\frac12} \simeq \abs{D}^{\frac1n}$, where $|D| \coloneqq \int_D \d x$ denotes the volume of $D$, then \cref{eq1619w} can be simplified to
\[\norm{u-c}_{L^2(Q)} \lesssim  d \norm{\nabla u}_{L^2(Q)},\]
where $\lesssim$ denotes an inequality up to the multiplication of the right-hand side by a constant. For example, this situation arises when $D\times I= Q_r^-(X_0) \coloneqq B_r(x_0) \times (t_0-r^2, t_0]$, for a given $t_0>0$. The assumptions of~\cref{lem02} can be shown to hold in some simple contexts. For example, if $\Omega$ satisfies the uniform interior cone condition~\citep[Chapt.~7.7]{gilbarg2001elliptic} and $D$ is such that $\abs{D} \simeq d^n$, such as a cube or a ball, then there exists a constant $\theta \in (0,1)$ and $\delta>0$ depending on $\Omega$, such that if $d \le \delta$, one of the conditions of \cref{lem02} holds.

\subsection{Cacciopolli's Inequality}

Next, we show Cacciopolli's inequality for solutions of parabolic equations~\eqref{eq:parabolic}, which can also be seen as a kind of reverse Poincar\'e inequality. In particular, it says that the $L^2$ norm of the spatial gradient (not the full gradient) of $u$ can be bounded above by the $L^2$ norm of $u$ on a slightly larger domain.

\begin{lemma} \label{lem03}
Let $D$ be a domain such that $D \cap \Omega \neq \emptyset$, $\Gamma\coloneqq \partial D \cap \overline \Omega$, and $K\subset D$ be a domain such that $\delta_0\coloneqq\dist_{L^2(D)}(K, \Gamma)>0$.
Let $I=(t_0, t_1)\subset\R$ and $I'=(t_0+\delta_1, t_1)$, for a given $0<\delta_1<t_1-t_0$.
Then, for any $u$ satisfying $\mathcal{P}u=0$ in $Q\coloneqq (D\cap\Omega)\times I$ and $u=0$ on $(\partial \Omega \cap D) \times I$, we have
\[ \int_{I'}\int_{K\cap \Omega} \abs{\nabla u(x,t)}^2\d x\d t
 \le \left(\frac{4\Lambda^2}{\lambda^2\delta_0^2}+\frac{2}{\lambda \delta_1} \right)\|u\|_{L^2(Q)}^2.\]
\end{lemma}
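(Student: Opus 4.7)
The plan is a standard energy estimate using the test function $\eta^2 u$, where $\eta$ is a space-time cutoff tailored to the geometry of $(K\cap\Omega)\times I'$ inside $(D\cap\Omega)\times I$. Because we only need control of the spatial gradient (not $u_t$), a single cutoff of product form suffices. Specifically, choose $\eta(x,t)=\phi(x)\psi(t)$ where $\phi\in C^\infty_c(\overline D)$ satisfies $0\le\phi\le 1$, $\phi\equiv 1$ on $K$, $\phi\equiv 0$ in a neighborhood of $\Gamma=\partial D\cap\overline\Omega$, and $|\nabla\phi|\le 1/\delta_0$; and $\psi\in C^\infty([t_0,t_1])$ satisfies $0\le\psi\le 1$, $\psi(t_0)=0$, $\psi\equiv 1$ on $I'=(t_0+\delta_1,t_1)$, and $|\psi'|\le 1/\delta_1$. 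Then $|\nabla\eta|\le 1/\delta_0$ and $|\eta_t|\le 1/\delta_1$.

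The main computation is to test the identity $\mathcal{P}u=u_t-\nabla\cdot(A\nabla u)=0$ against $\eta^2 u$ on $Q$. For the time term,
\[
\int_Q u_t\,\eta^2 u\,\d x\d t=\tfrac12\int_{D\cap\Omega}\eta^2 u^2\bigr|_{t=t_1}\d x-\int_Q \eta\,\eta_t\,u^2\,\d x\d t,
\]
since $\eta(\cdot,t_0)=0$. For the spatial term, integration by parts on $D\cap\Omega$ produces a boundary contribution on $\partial(D\cap\Omega)\subset\Gamma\cup(\partial\Omega\cap\overline D)$; this vanishes because $\phi\equiv 0$ near $\Gamma$ and $u\equiv 0$ on $\partial\Omega\cap D$. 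Hence
\[
\int_Q A\nabla u\cdot\nabla(\eta^2 u)\,\d x\d t=\int_Q \eta^2 A\nabla u\cdot\nabla u\,\d x\d t+2\int_Q \eta u\,A\nabla u\cdot\nabla\eta\,\d x\d t.
\]
Combining, dropping the nonnegative terminal term at $t=t_1$, and invoking uniform parabolicity to bound $A\nabla u\cdot\nabla u\ge\lambda|\nabla u|^2$ below and $|A\nabla u\cdot\nabla\eta|\le\Lambda|\nabla u||\nabla\eta|$ above, I obtain
\[
\lambda\int_Q \eta^2|\nabla u|^2\,\d x\d t\le \int_Q |\eta_t|\,u^2\,\d x\d t+2\Lambda\int_Q \eta|u||\nabla u||\nabla\eta|\,\d x\d t.
\]

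The last step is to absorb the cross term via Young's inequality $2\Lambda\eta|u||\nabla u||\nabla\eta|\le\tfrac{\lambda}{2}\eta^2|\nabla u|^2+\tfrac{2\Lambda^2}{\lambda}|\nabla\eta|^2 u^2$, subtract the $\tfrac{\lambda}{2}\eta^2|\nabla u|^2$ contribution onto the left-hand side, and insert the bounds $|\eta_t|\le 1/\delta_1$ and $|\nabla\eta|^2\le 1/\delta_0^2$, yielding
\[
\int_Q \eta^2|\nabla u|^2\,\d x\d t\le\left(\frac{2}{\lambda\delta_1}+\frac{4\Lambda^2}{\lambda^2\delta_0^2}\right)\|u\|_{L^2(Q)}^2.
\]
Since $\eta\equiv 1$ on $(K\cap\Omega)\times I'$, the left-hand side dominates $\int_{I'}\!\int_{K\cap\Omega}|\nabla u|^2$, giving the claim.

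The only subtle point is justifying the integration by parts in space: one must verify that the cutoff $\phi$ kills the contribution on $\Gamma$ while the homogeneous Dirichlet condition handles the $\partial\Omega\cap D$ portion, so that no boundary term survives. Everything else is routine. I would note that this is a parabolic analogue of the classical Caccioppoli inequality and is complementary to Lemma~\ref{lem02}: there the spatial $L^2$ norm of $u$ was controlled by $\|\nabla u\|_{L^2}$, whereas here the roles are reversed at the cost of enlarging the domain by $\delta_0$ in space and $\delta_1$ in time.
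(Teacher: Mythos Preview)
Your proof is correct and follows essentially the same route as the paper: a product cutoff $\eta=\phi(x)\psi(t)$ (the paper writes $\eta(x)\zeta(t)$), testing $\mathcal{P}u=0$ against $\eta^2 u$, the same handling of boundary terms via $\phi\equiv 0$ near $\Gamma$ and $u=0$ on $\partial\Omega\cap D$, and the same Young's inequality with parameter $\lambda/2$ to absorb the cross term, yielding the identical constants. The only cosmetic difference is that the paper integrates over $(t_0,t)$ for arbitrary $t\in I$ so as to simultaneously capture the $\esssup_t$ bound recorded just after the lemma, whereas you integrate directly over all of $I$; for the lemma as stated your version suffices.
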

\begin{proof}
Let $\eta\in C^1(\R^n)$ such that $0\le \eta \le 1$, $\eta=1$ on $K$, $\eta=0$ in a neighbourhood of $\Gamma$, and $\abs{\nabla \eta} \le 1/\delta_0$. We also consider a similar function $\zeta \in C^1(\R)$, defined on the temporal domain, such that $0\le \zeta \le 1$, $\zeta=1$ on $I'$, $\zeta=0$ in a neighbourhood of $t_0$, and $\abs{\zeta'} \le 1/\delta_1$.

Then, the function $\eta^2 \zeta^2 u$, defined on $(D\cap \Omega) \times (t_0,t)$ for $t_0<t<t_1$, satisfies the equation $\mathcal{P}(\eta^2 \zeta^2 u) = 0$. After multiplying the equation by $u$, integrating by parts over $(D\cap \Omega) \times (t_0,t)$, and using the fact that $\eta$ vanishes near $\Gamma$, which gives $\eta^2\zeta^2 u=0$ on $\partial(D\cap \Omega)$, we obtain,
\begin{equation} \label{eq_lemm_4}
\int_{t_0}^t \int_{D\cap \Omega} \eta^2(x) \zeta^2(t) u(x,t) \frac{\partial}{\partial t}u(x,t)\d x\d t + \int_{t_0}^t \int_{D\cap\Omega} A(x,t) \nabla u\cdot \nabla(\eta^2 \zeta^2 u)\d x\d t=0.
\end{equation}
The first term in \cref{eq_lemm_4} can be reformulated as
\begin{align*}
\int_{t_0}^t \int_{D\cap \Omega} \eta^2(x) \zeta^2(t) u\frac{\partial}{\partial t}u\d x\d t &= \int_{t_0}^t\int_{D\cap \Omega} \frac{\partial}{\partial t} \left(\frac12 \eta^2(x) \zeta^2(t) u^2\right)- \eta^2(x) \zeta(t) \zeta'(t) u^2\d x\d t.\\
&=\int_{D\cap \Omega} \frac12 \eta^2(x) \zeta^2(t) u(x,t)^2\d x- \int_{t_0}^t\int_{D\cap \Omega}\eta^2 \zeta \zeta' u^2\d x\d t,
\end{align*}
since $\zeta$ vanishes near $t_0$.
Then, the second term of \cref{eq_lemm_4} satisfies
\[\int_{t_0}^t \int_{D\cap\Omega} A(x,t) \nabla u\cdot \nabla(\eta^2 \zeta^2 u)\d x\d t=\int_{t_0}^t \int_{D\cap\Omega} \eta^2 \zeta^2 A \nabla u \cdot \nabla u + 2\eta \zeta^2 u A \nabla u \cdot \nabla \eta \d x\d t.
\]
Combining these two terms gives the following equality:
\begin{align*}
\int_{D\cap \Omega} \frac12 \eta^2 \zeta^2(t) u^2\d x + \int_{t_0}^t \int_{D\cap\Omega} \eta^2 \zeta^2 A \nabla u \cdot \nabla u &\leq \int_{t_0}^t \int_{D\cap\Omega} 2\eta \zeta^2 |u| |A \nabla u \cdot \nabla \eta| \d x\d t\\
&+\int_{t_0}^t\int_{D\cap \Omega} \eta^2 \zeta \abs{\zeta'} u^2\d x\d t.
\end{align*}
We now use the uniform parabolicity of $\mathcal{P}$: $\lambda|\nabla u|^2\leq A\nabla u\cdot\nabla u$ and $|A\nabla u\cdot\nabla \eta|\leq \Lambda|\nabla u\|\nabla \eta|$ to obtain
\begin{multline*}	
\int_{D\cap \Omega} \frac12 \eta^2 \zeta^2(t) u^2\d x+ \lambda \int_{t_0}^t\int_{D\cap \Omega} \eta^2 \zeta^2 \abs{\nabla u}^2 \d x\d t\\
\leq  2 \Lambda \int_{t_0}^t\int_{D\cap \Omega} \eta \zeta^2 \abs{u} \abs{\nabla u} \abs{\nabla \eta} \d x\d t + \int_{t_0}^t\int_{D\cap \Omega} \eta^2 \zeta \abs{\zeta'} u^2\d x\d t\\
\le \epsilon \int_{t_0}^t\int_{D\cap \Omega} \eta^2 \zeta^2 \abs{\nabla u}^2\d x\d t + \frac{\Lambda^2}{\epsilon}\int_{t_0}^t\int_{D\cap \Omega} \zeta^2 \abs{\nabla \eta}^2u^2\d x\d t+\int_{t_0}^t\int_{D\cap \Omega} \eta^2 \zeta \abs{\zeta'} u^2\d x\d t,
\end{multline*}
where the second inequality follows from Young's inequality: $ab\leq (a^2+b^2)/2$ with $a=\sqrt{2\epsilon}\eta|\nabla u|$ and $b=\Lambda\sqrt{2/\epsilon}|u\|\nabla \eta|$, for any $\epsilon>0$. Then, choosing $\epsilon= \lambda/2$ and using the properties $\abs{\nabla \eta} \le 1/\delta_0$, $\abs{\zeta'} \le 1/\delta_1$, $0 \le \eta \le 1$, and $0 \le \zeta \le 1$, we find that
\begin{equation}						\label{eq1927f}
\frac12 \int_{D\cap \Omega} \eta^2 \zeta^2(t) u^2\d x+ \frac{\lambda}{2} \int_{t_0}^t\int_{D\cap \Omega} \eta^2 \zeta^2 \abs{\nabla u}^2\d x\d t \leq 
\left(\frac{2\Lambda^2}{\lambda \delta_0^2}+\frac{1}{\delta_1}\right) \int_{t_0}^t\int_{D\cap \Omega} u^2 \d x\d t.
\end{equation}
The result follows from the properties of $\eta$ and $\zeta$.
\end{proof}

Since $t\in (t_0, t_1)$ is arbitrary in \cref{eq1927f}, we also obtain the following inequality:
\[
\esssup_{t \in I'} \int_{K\cap \Omega} \abs{u(x,t)}^2\d x
 \leq \left(\frac{4\Lambda^2}{\lambda \delta_0^2}+\frac{2}{\delta_1} \right) \int_I \int_{D\cap \Omega} \abs{u(x,t)}^2\d x \d t,
\]
which can be combined with \cref{lem03} to find that
\begin{equation} \label{eq2047th}
\esssup_{t \in I'} \int_{K\cap \Omega} \abs{u(x,t)}^2\d x + \int_{I'}\int_{K\cap \Omega} \abs{\nabla u(x,t)}^2\d x\d t 
 \leq C  \int_I \int_{D\cap \Omega} \abs{u(x,t)}^2\d x \d t,
\end{equation}
where $C$ is a constant depending on $\lambda$, $\Lambda$, $K$, and $I'$. From the definition of the Banach space $V_2$,~\cref{eq2047th} can be understood as an upper bound of the norm of solution $u$ with respect to this Banach space. Using the bound given by \cref{eq2047th}, we can introduce a subspace $\mathscr{X}(D \times I)$ of $L^2(D \times I)$ and prove its closeness.

\begin{lemma} \label{lem04}
Let $D$ be a domain such that $D \cap \Omega \neq \emptyset$,  $\Gamma= \partial D \cap \overline \Omega$, $I = (t_0, t_1)$. Define $\mathscr{X}(D \times I)$ to be the subspace of $L^2(D \times I)$ consisting of the functions $u$ satisfying the following conditions.
\begin{enumerate}
\item $u=0$ on $(D \setminus \Omega) \times (t_0, t_1)$.
\item $u \in V_2(Q')$ for all $Q'=K \times (t_0+\tau, t_1)$, where $K \subset D$ with $\dist(K, \Gamma)>0$ and $0<\tau <t_1-t_0$.
\item $\mathcal{P}u=0$ in $(D\cap \Omega) \times (t_0, t_1)$ in the sense that, for almost all $t \in (t_0, t_1)$, the equality
\begin{equation} \label{eq:lem4_cond3}
\int_{D\cap \Omega} u(x,t)\eta(x,t)\d x - \int_0^{t}\int_{D\cap \Omega} u \partial_t \eta\d x\d t+\int_0^{t}\int_{D\cap \Omega} A\nabla u \cdot \nabla \eta\d x\d t=0
\end{equation}
holds for all smooth test function $\eta$ vanishing near $\partial(D\cap\Omega) \times (t_0, t_1)$ and $\overline{D\cap \Omega}\times \set{t_0}$.
\end{enumerate}
Then, the space $\mathscr{X}(D \times I)$ is closed in $L^2(D \times I)$.
\end{lemma}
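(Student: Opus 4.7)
The plan is to take a Cauchy sequence $\{u_k\}\subset\mathscr{X}(D\times I)$ with $L^2(D\times I)$-limit $u$ and verify the three defining conditions for $u$, using Caccioppoli's inequality as the workhorse. Condition 1 is automatic: each $u_k$ vanishes on $(D\setminus\Omega)\times I$, so $L^2$-convergence forces $u=0$ almost everywhere there.

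Condition 2 is the crux. Fix an arbitrary $Q'=K\times(t_0+\tau,t_1)$ with $\delta_0\coloneqq\dist(K,\Gamma)>0$ and $\tau\in(0,t_1-t_0)$. Since each $u_k$ already lies in $V_2$ of slightly enlarged subcylinders, solves $\mathcal{P}u_k=0$ in $(D\cap\Omega)\times I$, and vanishes on $(\partial\Omega\cap D)\times I$, linearity ensures the difference $u_k-u_m$ satisfies the hypotheses of \cref{lem03}. Applying the strengthened Caccioppoli estimate \cref{eq2047th} to $u_k-u_m$ gives
\[\|u_k-u_m\|_{V_2(Q')}^2 \leq C(\delta_0,\tau,\lambda,\Lambda)\,\|u_k-u_m\|_{L^2((D\cap\Omega)\times I)}^2,\]
so $\{u_k\}$ is Cauchy in the Banach space $V_2(Q')$. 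Its $V_2$-limit must agree with its $L^2$-limit, so $u\in V_2(Q')$.

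For condition 3, let $\eta$ be a test function vanishing near $\partial(D\cap\Omega)\times I$ and near $\overline{D\cap\Omega}\times\set{t_0}$. Its support is compactly contained in some cylinder $K\times(t_0+\tau,t_1)$ of the type just treated, so the previous step delivers $u_k\to u$ in $V_2(K\times(t_0+\tau,t_1))$. This upgrade supplies the three convergences needed to pass to the limit in \cref{eq:lem4_cond3}: the spacetime integral $\int_0^t\int u_k\,\partial_t\eta$ converges because $u_k\to u$ in $L^2(\supp\eta)$; the energy-type integral $\int_0^t\int A\nabla u_k\cdot\nabla\eta$ converges because $\nabla u_k\to\nabla u$ in $L^2(\supp\eta)$; and the instantaneous-in-time boundary term $\int_{D\cap\Omega} u_k(x,t)\eta(x,t)\d x$ converges for almost every $t$ because $V_2$-convergence yields $\esssup_{t}\|u_k(\cdot,t)-u(\cdot,t)\|_{L^2(K\cap\Omega)}\to 0$. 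Combining these three limits produces \cref{eq:lem4_cond3} for $u$ at almost every $t$, completing the verification.

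The main obstacle is the last convergence: the leading term in \cref{eq:lem4_cond3} is evaluated at a single time, so bare $L^2(D\times I)$-convergence of $u_k-u$ is inadequate. The $L^\infty(I;L^2(K\cap\Omega))$-component of the $V_2$-norm, furnished precisely by the essential-supremum strengthening \cref{eq2047th} of Caccioppoli's inequality, is what upgrades the $L^2$-convergence to pointwise-in-time convergence in $L^2(K\cap\Omega)$ and thereby closes the argument.
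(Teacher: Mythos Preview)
Your proof is correct and follows the same overall skeleton as the paper's---use the Caccioppoli estimate \cref{eq2047th} to upgrade $L^2$-convergence to $V_2(Q')$-control, then pass to the limit in the three conditions---but the mechanism you use at the key step is genuinely different. The paper applies \cref{eq2047th} to each $v_k$ individually to obtain a \emph{bounded} sequence in $V_2(Q')$, then invokes Banach--Alaoglu to extract a weakly convergent subsequence whose limit is identified with $v$. You instead apply \cref{eq2047th} to the \emph{difference} $u_k-u_m$ (legitimate by linearity of $\mathcal{P}$), which shows the sequence is Cauchy and hence strongly convergent in $V_2(Q')$.

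Your route is more elementary: it avoids weak-compactness machinery (which is slightly delicate here since $V_2$ carries an $L^\infty$-in-time component and is not reflexive) and delivers strong $V_2$-convergence directly. This pays off in condition~3, where your explicit use of the $\esssup_t$ part of the $V_2$-norm cleanly handles the instantaneous-in-time term in \cref{eq:lem4_cond3}; the paper dispatches that step with the single phrase ``by the same argument,'' leaving the details implicit. The paper's approach, on the other hand, is the more standard compactness pattern and would generalize more readily to nonlinear settings where differences of solutions are not themselves solutions.
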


\begin{proof}
Let $v\in L^2(D \times I)$ and $\set{v_k}_{k \in \N} \subset \mathscr{X}(D \times I)$ be a sequence converging to $v$, we want to show that $v\in \mathscr{X}(D \times I)$. First, using \cref{eq2047th}, we have
\[ \|v_k\|_{V_2(Q')} \le C \|v_k\|_{L^2(D \times I)}.\]
Following the Banach--Alaoglu Theorem, there exists a subsequence $\set{v_{i_k}}_{k \in \N}$ of $\set{v_k}_{k \in \N}$ that converges weakly in $V_2(Q')$ to $\hat{v} \in V_2(Q')$. Therefore, for any $w \in L^2(Q')$, we have $\langle v,w\rangle_{L^2(Q')} = \lim_{k \rightarrow \infty} \langle v_{i_k},w\rangle_{L^2(Q')} = \langle\hat{v},w\rangle_{L^2(Q')}$, which shows that $v = \hat{v} \in V_2(D \times I)$. By the same argument $v$ satisfies \cref{eq:lem4_cond3}. Finally, $v_k = 0$ on $(D \setminus \Omega) \times (t_0, t_1)$ implies that $v = 0$ on $(D \setminus \Omega) \times (t_0, t_1)$, and $v \in \mathscr{X}(D \times I)$.
\end{proof}

\subsection{Separable Approximation}

The following two lemmas quantify the dimension of a finite-dimensional subspace $W$ needed to approximate a function in $\mathscr{X}(D \times I)$ up to a prescribed relative error. In this way, given a parabolic equation solution and the desired accuracy, we can determine the rank of the separable approximant of the corresponding Green's function. We begin by defining a finite-dimensional subspace $V_k$ of the space $\mathscr{X}(D \times I)$, and bounding the distance between parabolic equation solutions and $V_k$.

\begin{lemma} \label{lem05}
Let $\Omega \subset \R^n$ be a domain satisfying the uniform interior cone condition, $D=\set{ x \in \R^n: \norm{x-x_0}_\infty < d/2}$ be a cube in $\R^n$ of side length $d>0$ centered at $x_0\in\R^n$ such that $\Omega \cap D \neq \emptyset$, and $I=(t_0, t_0+\beta d^2)$ for some constant $\beta>0$ and $t_0>0$. Then there exists $\delta_0>0$ depending only on $\Omega$ such that for any $k \ge (1+ \lceil{d/\delta_0} \rceil)^{n+2}$, there is a subspace $V_k \subset \mathscr{X}(D \times I)$ with $\dim V_k \le k$ such that
\begin{equation} \label{eq2154sun}
\dist_{L^2(D \times I)}(u, V_k) \le c_{{\rm appr}}\,  k^{-\frac{1}{n+2}} d \,\norm{\nabla u}_{L^2(D \times I)},\quad u \in \mathscr{X}(D \times I) \cap V_2(D \times I),
\end{equation}
where $\mathscr{X}(D \times I)$ is defined by \cref{lem04}, $c_{{\rm appr}} = 2^{n+2} (\omega_n^{2-2/n}\theta^{-2}+2\Lambda^2 \omega_n^{2/n} \beta^2 \theta^{-2-2/n})^{1/2}$, and $\theta=\theta(\Omega)$ is determined by the characteristics of the uniform cone condition.
\end{lemma}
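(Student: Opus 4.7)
The plan is to build $V_k$ from a space of piecewise constants on a carefully chosen space-time grid, and then exploit the closedness of $\mathscr{X}(D\times I)$ (\cref{lem04}) to project this space into $\mathscr{X}(D\times I)$ without losing approximation power. The key input is the weighted Poincar\'e-type estimate from \cref{lem02}, which controls $u - c$ by the spatial gradient of $u$ on a space-time box precisely when the spatial diameter and the square root of the temporal length are balanced.

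First I would construct the grid. Set $m=\lfloor k^{1/(n+2)}\rfloor$. The hypothesis $k\ge(1+\lceil d/\delta_0\rceil)^{n+2}$ forces $m\ge 1+\lceil d/\delta_0\rceil$, so $h:=d/m\le \delta_0$, where $\delta_0=\delta_0(\Omega)$ is the scale furnished by the uniform interior cone condition below which one of the two dichotomy hypotheses of \cref{lem02} holds on every axis-aligned cube of side $h$ with a single $\theta=\theta(\Omega)$. Partition $D$ into $m^n$ congruent sub-cubes $D_i$ of side $h$ and $I$ into $m^2$ sub-intervals $I_j$ of length $\beta h^2$, yielding $m^{n+2}\le k$ blocks $Q_{ij}=D_i\times I_j$. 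Each $Q_{ij}$ has $|D_i|=h^n$ and $|I_j|^{1/2}=\sqrt{\beta}\,h$, matching the balanced regime of \cref{lem02}. Applying that lemma on each block produces a constant $c_{ij}$ with
\[
\|u-c_{ij}\|_{L^2(Q_{ij})}\le C_0\, h\, \|\nabla u\|_{L^2(Q_{ij})},
\]
where $C_0$ is the square root of the bracketed expression in \cref{eq1619w} specialized to $d = h$, $|D|=h^n$, $|I|=\beta h^2$; a routine simplification gives $C_0 \simeq 2^n(\omega_n^{2-2/n}\theta^{-2}+2\Lambda^2\omega_n^{2/n}\beta^2\theta^{-2-2/n})^{1/2}$.

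Next I would assemble the subspace. Let $W_k:=\operatorname{span}\{\chi_{Q_{ij}}\}\subset L^2(D\times I)$, of dimension $m^{n+2}\le k$. By \cref{lem04}, $\mathscr{X}(D\times I)$ is a closed subspace of $L^2(D\times I)$, so the orthogonal projection $\Pi$ onto it is well-defined; set $V_k:=\Pi(W_k)\subset\mathscr{X}(D\times I)$, so $\dim V_k\le k$. For $u\in\mathscr{X}(D\times I)\cap V_2(D\times I)$ let $u_{{\rm pc}}:=\sum_{i,j}c_{ij}\chi_{Q_{ij}}\in W_k$. Since $\Pi u=u$ and $\Pi$ is non-expansive, summing the local bound gives
\[
\dist_{L^2}(u,V_k)\le \|u-\Pi u_{{\rm pc}}\|_{L^2} = \|\Pi(u-u_{{\rm pc}})\|_{L^2}\le \|u-u_{{\rm pc}}\|_{L^2}\le C_0\, h\, \|\nabla u\|_{L^2(D\times I)}.
\]
Finally, the hypothesis forces $k^{1/(n+2)}\ge 2$, hence $m\ge k^{1/(n+2)}/2$ and $h\le 2d/k^{1/(n+2)}$, producing \cref{eq2154sun} with the claimed constant after absorbing the extra factor of $2$ into $c_{{\rm appr}}$.

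The main technical hurdle is uniform validity, with a single $\theta$, of one of the two dichotomy conditions of \cref{lem02} on every sub-cube $D_i$: cubes near $\partial\Omega$ must satisfy $|D_i\setminus\Omega|\ge\theta|D_i|$ while interior cubes must contain a ball of comparable volume. This is precisely what the uniform interior cone condition on $\Omega$ buys, provided we keep the sub-cube side $h$ below the geometric scale $\delta_0(\Omega)$; the lower bound on $k$ in the statement is calibrated exactly so that the finest partition meeting this constraint is available.
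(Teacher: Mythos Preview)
Your proposal is correct and follows essentially the same approach as the paper: partition $D\times I$ into $m^n\times m^2$ congruent blocks with $m=\lfloor k^{1/(n+2)}\rfloor$, apply \cref{lem02} on each block to obtain a piecewise constant approximant, and then project the piecewise constant space onto the closed subspace $\mathscr{X}(D\times I)$ via the $L^2$-orthogonal projection. The only detail you omit that the paper makes explicit is the handling of sub-cubes $D_i$ with $D_i\cap\Omega=\emptyset$, where \cref{lem02} does not apply; there one simply takes $c_{ij}=0$ since $u\in\mathscr{X}(D\times I)$ vanishes on $(D\setminus\Omega)\times I$, giving zero local error.
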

\begin{proof}
Let $\ell\geq 1$, we first subdivide the cube $D$ uniformly into $\ell^n$ sub-cubes and subdivide the interval $(t_0, t_1)$ into $\ell^2$ subintervals to form $\ell^{n+2}$ cylinders of the form $Q_i=D_i\times I_i$, where $D_i$ is a cube of side length $d/\ell$ and $I_i$ is an interval of length $\beta (d/\ell)^2$, for $1\leq i\leq \ell^{n+2}$.
Since $\Omega$ satisfies the uniform interior cone condition, there exists $\delta_0=\delta_0(\Omega)>0$ and $\theta=\theta(\Omega)>0$ such that if $d/\ell \le \delta_0$, then either of the conditions 1 or 2 in \cref{lem02} holds for all $D_i$ satisfying $D_i \cap \Omega \neq \emptyset$.

We now choose $\ell \ge \lceil{d/\delta_0} \rceil$ so that $d/\ell \le \delta_0$, and define the space $W$ of piecewise constant functions on the domains $Q_i$ by
\[
W\coloneqq \set{v \in L^2(D \times I): v \text{ is constant on }Q_i \text{ for all }1\leq i\leq \ell^{n+2}},
\]
then $\dim W = \ell^{n+2}$. Let $1\leq i\leq \ell^{n+2}$, we first consider the case where $D_i \cap \Omega \neq \emptyset$. We know that $D_i$ is convex, $\abs{D_i}=(d/\ell)^n$, $\diam(D_i)= \sqrt[n]{2}d/\ell$, and $\abs{I_i}=\beta d^2/\ell^2$. According to \cref{lem02}, there exists a constant $c_i\in \R$ such that
\begin{equation} \label{eq2115m}
\int_{Q_i} \abs{u-c_i}^2\d x\d t \leq \left( \frac{2^{2n+2}\omega_n^{2-\frac2n}}{\theta^2}+\frac{2^{2n+3}\Lambda^2 \omega_n^{\frac2n} \beta^2 }{\theta^{2+\frac2n}} \right) \frac{d^2}{\ell^2} \int_{Q_i} \abs{\nabla u}^2\d x\d t.
\end{equation}
If $D_i \cap \Omega =\emptyset$, then $u \in \mathscr{X}(D \times I)$ implies that $u=0$ on $Q_i$, and \cref{eq2115m} holds with $c_i=0$.

Next, we define a piecewise constant function $\bar u\in W$ such that $\bar u\vert_{Q_i}=c_i$ for $1\leq i\leq \ell^{n+2}$. Summing \cref{eq2115m} over $i$ yields the following inequality
\[\norm{u-\bar u}_{L^2(D \times I)} \le \left( \frac{2^{2n+2}\omega_n^{2-\frac2n}}{\theta^2}+\frac{2^{2n+3}\Lambda^2 \omega_n^{\frac2n} \beta^2 }{\theta^{2+\frac2n}}\right)^{1/2} \frac{d}{\ell} \norm{\nabla u}_{L^2(D \times I)}.\]
Let $k \geq (1+ \lceil{d/\delta_0} \rceil)^{n+2}$ be an integer and choose $\ell= \lfloor{k^{\frac{1}{n+2}}} \rfloor$ such that $\ell^{n+2} \le k <(\ell+1)^{n+2}$, $\ell \ge \lceil{d/\delta_0}\rceil$, and $\dim W \le \ell^{n+2} \le k$. Now, since $1/\ell \le 2/(\ell+1) \le 2 k^{-\frac{1}{n+2}}$, we have
\begin{equation} \label{eq:lm5_pb}
\norm{u-\bar u}_{L^2(D \times I)} \le \left( \frac{2^{2n+4}\omega_n^{2-\frac2n}}{\theta^2}+\frac{2^{2n+5}\Lambda^2  \omega_n^{\frac2n} \beta^2 }{\theta^{2+\frac2n}}\right)^{1/2} k^{-\frac{1}{n+2}} d \norm{\nabla u}_{L^2(D \times I)}.
\end{equation}
Finally, let $P: L^2(D \times I) \to \mathscr{X}(D \times I)$ be the $L^2(D \times I)$-orthogonal projection onto $\mathscr{X}(D \times I)$ and $V_k\coloneqq P(W)$. The statement of the lemma follows from \cref{eq:lm5_pb} and
\[
\dist_{L^2(D \times I)}(u, V_k) \leq \norm{u-P(\bar u)}_{L^2(D \times I)} = \norm{P(u- \bar u)}_{L^2(D \times I)} \le \norm{u-\bar u}_{L^2(D \times I)}.
\]
\end{proof}

From~\cref{lem05}, we can use the constant $\delta_0$ to fix an accuracy and construct a finite-dimensional subspace $W$ of $\mathscr{X}(D \times I)$ such that the $L^2(D \times I)$-distance between solutions to \cref{eq:parabolic} and $W$ is within the accuracy threshold. In the following lemma, we provide an upper bound on the dimensionality of $W$.

\begin{lemma} \label{lem06}
Let $\Omega \subset \R^n$ be a domain satisfying the uniform interior cone condition, $D_1=\set{ x \in \R^n: \norm{x-x_0}_\infty < d/2}$, $D=\set{ x \in \R^n: \norm{x-x_0}_\infty <(1/2+\rho)d}$, $I_1=(t_0, t_0+\beta d^2)$, and $I=(t_0, t_0+\beta(1+2\rho)^2 d^2)$, for some $\beta, \rho>0$. Assume that $\Omega \cap D_1 \neq \emptyset$ and let $\delta_0$ and $\theta$ be the constant introduced in \cref{lem05} characterized by the uniform cone condition. Then, for any $M \geq \exp\{2(\lceil{(1+2\rho)d/\delta_0}\rceil +1)\}$, there exists a subspace $W \subset \mathscr{X}(D_1\times I_1)$ such that
\[\dist_{L^2(D_1\times I_1)}(u, W) \le \frac{1}{M} \,\norm{u}_{L^2(D\times I)},\quad \forall u \in \mathscr{X}(D\times I),\]
and
\begin{equation} \label{eq_cp}
\dim(W) \le c_\rho^{n+2} \lceil{\log M}\rceil^{n+3} + \lceil{\log M}\rceil,\quad c_\rho=e(2+\rho^{-1})\kappa_c \,c_{{\rm appr}},
\end{equation}
where $\kappa_c=\sqrt{4\Lambda^2/\lambda^2+1/(2\lambda \beta)}$ and $c_{{\rm appr}}$ is the constant defined in \cref{lem05}.
\end{lemma}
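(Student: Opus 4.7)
The plan is to adapt the Bebendorf--Hackbusch telescoping strategy to the parabolic geometry, iterating the one--step approximation of~\cref{lem05} on a nested family of sub--cylinders and using Caccioppoli's inequality~\cref{lem03} at each stage to convert the spatial--gradient bound into an $L^2$ estimate on a slightly larger cylinder. Set $\nu := \lceil \log M\rceil$ and interpolate between $Q_0 := D_1\times I_1$ and $Q_\nu := D\times I$ by the nested cylinders
\[
E_j := \{x\in\R^n : \|x-x_0\|_\infty < (\tfrac12 + j\rho/\nu) d\}, \quad J_j := (t_0, t_0 + \beta(1+2j\rho/\nu)^2 d^2), \quad Q_j := E_j\times J_j.
\]
The side length $d_j := (1+2j\rho/\nu)d \le (1+2\rho)d$ gives a spatial buffer $\rho d/\nu$ and a temporal buffer $|J_{j+1}| - |J_j| \ge 4\beta\rho d^2/\nu$. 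The quadratic--in--spatial scaling of the time intervals is exactly what makes~\cref{lem05} apply on every $Q_j$ with a uniform constant $c_{\rm appr}$, and the lower bound on $M$ guarantees $k \ge (1+\lceil d_j/\delta_0\rceil)^{n+2}$.

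For $v \in \mathscr{X}(Q_{j+1}) \cap V_2(Q_{j+1})$, \cref{lem05} produces a subspace $V^{(j)}_k \subset \mathscr{X}(Q_j)$ of dimension $\le k$ with $\dist_{L^2(Q_j)}(v, V^{(j)}_k) \le c_{\rm appr} k^{-1/(n+2)} d_j \|\nabla v\|_{L^2(Q_j)}$, while~\cref{lem03} applied between $Q_j$ and $Q_{j+1}$ gives
\[
\|\nabla v\|_{L^2(Q_j)} \le \sqrt{\frac{4\Lambda^2 \nu^2}{\lambda^2 \rho^2 d^2} + \frac{\nu}{2\beta\lambda \rho d^2}}\, \|v\|_{L^2(Q_{j+1})} \le \kappa_c\, \frac{\nu}{\rho d}\, \|v\|_{L^2(Q_{j+1})},
\]
where the last inequality uses $\rho/\nu \le 1$, which the lower bound on $M$ makes automatic. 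Combining with $d_j \le (2+\rho^{-1})\rho d$, the composite one--step estimate becomes $\dist_{L^2(Q_j)}(v, V^{(j)}_k) \le c_{\rm appr}\kappa_c(2+\rho^{-1})\nu\, k^{-1/(n+2)} \|v\|_{L^2(Q_{j+1})}$, and choosing $k := \lceil (e\, c_{\rm appr}\kappa_c(2+\rho^{-1})\nu)^{n+2}\rceil$, which is at most $c_\rho^{n+2}\nu^{n+2}+1$, forces the contraction factor to at most $1/e$.

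Telescoping. Set $r_\nu := u$ and, for $j = \nu - 1, \ldots, 0$, let $v_j$ be the $L^2(Q_j)$--best approximant of $r_{j+1}$ in $V^{(j)}_k$ and define $r_j := r_{j+1} - v_j$. Linearity of $V^{(j)}_k \subset \mathscr{X}(Q_j)$ keeps $r_j \in \mathscr{X}(Q_j)\cap V_2(Q_j)$, so $\|r_j\|_{L^2(Q_j)} \le e^{-1}\|r_{j+1}\|_{L^2(Q_{j+1})}$ and therefore $\|r_0\|_{L^2(Q_0)} \le e^{-\nu}\|u\|_{L^2(Q_\nu)} \le M^{-1}\|u\|_{L^2(D\times I)}$. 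The element $w := v_{\nu-1}|_{Q_0} + \cdots + v_0|_{Q_0}$ lies in $W := \sum_{j=0}^{\nu-1}\{v|_{Q_0} : v\in V^{(j)}_k\} \subset \mathscr{X}(D_1\times I_1)$ and satisfies $u - w = r_0$ on $Q_0$, so $W$ realizes both the approximation bound and the dimension bound $\dim W \le \nu k \le c_\rho^{n+2}\lceil\log M\rceil^{n+3} + \lceil\log M\rceil$.

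The main obstacle will be that~\cref{lem03} places its temporal buffer at the \emph{start} of the interval, whereas the $J_j$ all share the initial time $t_0$ and grow only at the end, so a direct application is unavailable. I expect to need either a small inward time shift of each intermediate $J_j$ (with a separate boundary--layer estimate recovering the initial slice of $I_1$) or a symmetric variant of~\cref{lem03} using a temporal cutoff supported strictly inside $J_{j+1}$; verifying that the resulting Caccioppoli constant is still bounded by $\kappa_c\,\nu/(\rho d)$ so that $c_\rho = e(2+\rho^{-1})\kappa_c c_{\rm appr}$ is preserved is the most technical bookkeeping in the argument.
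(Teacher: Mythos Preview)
Your telescoping scheme is the paper's proof with the index reversed: the paper nests $Q^{(0)}=D\times I\supset\cdots\supset Q^{(i)}=D_1\times I_1$, applies \cref{lem03} between consecutive cylinders to obtain $\|\nabla v\|_{L^2(Q^{(j)})}\le\kappa_c\,(i/\rho d)\,\|v\|_{L^2(Q^{(j-1)})}$, combines this with \cref{lem05}, takes $i=\lceil\log M\rceil$ with contraction factor $M^{-1/i}$ (equivalent to your $1/e$), and lands on the identical dimension count $i\lceil(iB)^{n+2}\rceil\le c_\rho^{n+2}\lceil\log M\rceil^{n+3}+\lceil\log M\rceil$.

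On the obstacle you flag at the end: the paper does not resolve it either---it invokes \cref{lem03} with $I'=I^{(j)}$ and $I=I^{(j-1)}$ sharing the left endpoint $t_0$, and simply reads off $\kappa_c\,i/(\rho d)$ as though the buffer were available. Your second proposed workaround (a cutoff vanishing only at the terminal time) fails for forward parabolic equations, since integrating $\int\eta^2\zeta^2 u\,u_t$ by parts then leaves an uncontrolled term $\tfrac12\int\eta^2\zeta^2(t_0)\,u^2(\cdot,t_0)\,dx$ on the right-hand side. The clean repair is your first suggestion: restate the lemma with $I$ extending beyond $I_1$ on the \emph{initial} side as well, so that every intermediate $J_j$ carries a left buffer inside $J_{j+1}$ and \cref{lem03} applies verbatim. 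This is in fact how the paper uses the lemma downstream in the proof of \cref{thm01}, where it sets $I_1=(t_0-\beta d^2/2,\,t_0+\beta d^2/2)$ and $I=(t_0-\beta d^2(1+\rho)^2/2,\,t_0+\beta d^2(1+\rho)^2/2)$.
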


\begin{proof}
Our proof follows closely the argument for elliptic PDEs~\citep[Lem.~2.6]{bebendorf2003existence}. Let $i \in \N_{\geq 1}$ and, for $0\leq k\leq i$, define
\begin{align*}
D^{(k)}&=\set{x \in \R^n: \norm{x-x_0}_\infty < (1/2+(1-k/i)\rho)d},\\
I^{(k)}&=(t_0, t_0+\beta (1+2(1-k/i)\rho)^2d^2),
\end{align*}
such that $D_1=D^{(i)}\subset D^{(i-1)} \subset \cdots \subset D^{(0)} = D$, and
$I_1=I^{(i)} \subset I^{(i-1)} \subset \cdots \subset I^{(0)} = I$. We also denote $Q^{(k)}=D^{(k)}\times I^{(k)}$ and $\mathscr{X}^{(k)}= \mathscr{X}(Q^{(k)})$. Let $1\leq j\leq i$. By applying \cref{lem03} with the domains $K=D^{(j)}$, $D=D^{(j-1)}$, $I'=I^{(j)}$, and $I=I^{(j-1)}$, we find that
\begin{equation} \label{eq_lemma_8_1}
\norm{\nabla v}_{L^2(Q^{(j)})} \le \kappa_c \frac{i}{\rho d}\, \norm{v}_{L^2(Q^{(j-1)})},\quad v \in \mathscr{X}^{(j-1)}.
\end{equation}
Moreover, \cref{eq2047th} shows that $\mathscr{X}^{(j-1)}\subset \mathscr{X}^{(j)} \cap V_2(Q^{(j)})$. In addition, the choice of $D=D^{(j)}$ and $I=I^{(j)}$ in \cref{lem05}, shows that there exists a subspace $V_j \subset \mathscr{X}^{(j)}$ such that
\begin{equation} \label{eq_lemma_8_2}
\dist_{L^2(Q^{(j)})}(v, V_j) \le c_{{\rm appr}}\, \frac{(1+2\rho)d}{iB} \,\norm{\nabla v}_{L^2(Q^{(j)})},\quad  v \in \mathscr{X}^{(j)} \cap V_2(Q^{(j)}),
\end{equation}
where $B$ is a constant chosen so that $\lceil{(1+2\rho)d/\delta_0}\rceil +1 \le iB \le k^{\frac{1}{n+2}}$ and $\dim V_j \le k$. In particular, we can set $k\coloneqq\lceil{(iB)^{n+2}} \rceil$ so that $k \ge (1+ \lceil{(1+2\rho)d/\delta_0} \rceil)^{n+2}$. Combining \cref{eq_lemma_8_1,eq_lemma_8_2} yields
\[\dist_{L^2(Q^{(j)})}(v, V_j) \le \frac{1+2\rho}{\rho} \,\frac{c_{{\rm appr}} \,\kappa_{c}}{B}  \,\norm{v}_{L^2(Q^{(j-1)})},\quad v \in \mathscr{X}^{(j-1)}.\]
We now choose $B\coloneqq B_0 M^{1/i}$ and $B_0\coloneqq c_{{\rm appr}} \kappa_{c} \frac{1+2\rho}{\rho}$ to obtain the following inequality:
\begin{equation}			\label{eq1831f}
\dist_{L^2(Q^{(j)})}(v, V_j) \le M^{-1/i}\,\norm{v}_{L^2(Q^{(j-1)})},\quad v \in \mathscr{X}^{(j-1)}.
\end{equation}

Now let $u \in \mathscr{X}^{(0)}$. We aim to iteratively express $u$ as a sum of functions in smaller subspaces. Initially, we define $v_0=u$ and use \cref{eq1831f} to decompose $v_0$ on $Q^{(1)}$ as $v_0\vert_{Q^{(1)}}=u_1+v_1$, where $u_1 \in V_1$ and $v_1$ satisfies
$\smash{\norm{v_1}_{L^2(Q^{(1)})}} \le \smash{M^{-1/i}\,\norm{v_0}_{L^2(Q^{(0)})}}$.
Consequently, we see that $v_1\in\mathscr{X}^{(1)}$. We can continue this process for $1\leq j\leq i$, such that $v_{j-1}\vert_{Q^{(j)}}=u_j+v_j$, where $u_j \in V_j$ and $v_j\in  \mathscr{X}^{(j)}$ satisfies 
$\smash{\norm{v_j}_{L^2(Q^{(j)})}} \le \smash{M^{-1/i}\,\norm{v_{j-1}}_{L^2(Q^{(j-1)})}}$. Finally, we define the subspace $\smash{W=\Span \set{ V_j \vert_{D_1\times I_1}: 1\leq j\leq i}}$ using the restrictions of the $V_j$ to the smallest domain $\smash{Q^{(i)}=D^{(i)}\times I^{(i)}=D_1\times I_1}$, which contain $\smash{u_j \vert_{D_1\times I_1} \in V_j \vert_{D_1\times I_1} \subset W}$ for $1\leq j\leq i$.
Therefore, the decomposition of $v_0$ as $\smash{v_0=v_i+\sum_{j=1}^i u_j}$ leads to
\[
\dist_{L^2(D_1\times I_1)}(v_0, W) \le \norm{v_i}_{L^2(D_1\times I_1)} \le (M^{-1/i})^i \norm{v_0}_{L^2(Q^{(0)})} = M^{-1} \norm{u}_{L^2(D\times I)}.
\]
We then choose $i=\lceil{\log M}\rceil$ and use the definition of $W$ to bound the dimension of $W$ by
\[
\dim(W) \le \sum_{j=1}^i \dim(V_j) = i \lceil{(iB)^{n+2}}\rceil \le i + B^{n+2} i^{n+3} \le \lceil{\log M}\rceil + B_0^{n+2} e^{n+2} \lceil{\log M}\rceil^{n+3},
\]
because $B=B_0 M^{1/i}\leq B_0 e$. The statement of the lemma follows by defining $c_\rho=B_0 e = c_{{\rm appr}} \kappa_{c} e \frac{1+2\rho}{\rho}$.
\end{proof}

We are now ready to prove that the Green's function associated with a parabolic PDE has a separable approximation in terms of $L^2$-norm on well-separated domains.

\begin{theorem} \label{thm01}
Let $\Omega \subset \R^n$ be a domain satisfying the uniform interior cone condition and $\rho>0$. Let $D_1,D_2\subset\R^n$ be two domains such that $D_1$ is convex and $I_1,I_2\subset (0,T)$ be two open bounded intervals, such that $Q_X=(D_1\cap\Omega)\times I_1$ and $Q_Y=(D_2\cap\Omega)\times I_2$ are admissible, i.e., $\dist(Q_X,Q_Y)\geq \rho \max\{\diam Q_X,\diam Q_Y\}$. Then, for any $\epsilon>0$ sufficiently small, there exists a separable approximation of the form
\[
G_k(x,t,y,s)=\sum_{i=1}^k u_i(x,t)  v_i(y,s),\quad (x,t,y,s)\in Q_X\times Q_Y,
\]
where $k \le k_\epsilon=c_{\rho/2}^{n+2} \lceil{\log\frac{1}{\epsilon}}\rceil^{n+3} + \lceil{\log \frac{1}{\epsilon}}\rceil$, and $c_\rho$ is defined in~\eqref{eq_cp}, such that
\begin{equation}			\label{eq1624sat}
\norm{G(\cdot,\cdot,y,s)-G_k(\cdot,\cdot,y,s)}_{L^2(Q_X)} \le \epsilon \norm{G(\cdot,\cdot,y,s)}_{L^2(\hat Q_X)},\quad (y,s) \in Q_Y,
\end{equation}
where $\hat{Q}_X\coloneqq \{X\in Q,\, \dist(X,Q_X)\leq \frac{\rho}{2}\diam Q_X\}$.
\end{theorem}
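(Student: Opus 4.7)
The plan is to apply \cref{lem06} uniformly in $(y,s)\in Q_Y$ to the slice $(x,t)\mapsto G(x,t,y,s)$, which solves the homogeneous parabolic equation away from the singular point $(y,s)$. The key observation that makes the proof work is that the approximation subspace produced by \cref{lem06} depends only on the geometry of the enlarged cube-interval, not on the particular function being approximated; a single such subspace, applied to every slice simultaneously, will produce the desired separable representation.

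First I would set $d:=\diam Q_X$ and embed $Q_X$ in a cube-interval $\tilde Q_1:=\tilde D_1\times \tilde I_1$, where $\tilde D_1$ is an $\ell^\infty$-cube of side $d$ centered at some $x_0$ and $\tilde I_1$ is an interval of length $\beta d^2$; such an embedding exists precisely because the metric $m$ of \cref{eq_def_beta} pairs $\ell^\infty$ in space with the rescaled temporal distance $\sqrt{|\cdot|/\beta}$. I would then introduce the enlarged cube-interval $\tilde Q:=\tilde D\times \tilde I$ prescribed by \cref{lem06} with parameter $\rho/2$, and verify that $\tilde Q\subseteq \hat Q_X$ and $\dist_m(\tilde Q, Q_Y)\geq (\rho/2)d>0$; the latter follows from the admissibility bound $\dist(Q_X,Q_Y)\geq \rho d$ and ensures in particular that $(y,s)\notin\tilde Q$ for every $(y,s)\in Q_Y$.

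Next I would check the three defining conditions of \cref{lem04} to conclude $G(\cdot,\cdot,y,s)\in\mathscr{X}(\tilde Q)$ for all $(y,s)\in Q_Y$: the zero Dirichlet boundary condition for $G$ gives vanishing on $(\tilde D\setminus\Omega)\times \tilde I$; interior parabolic regularity away from the singularity $(y,s)$, together with \cref{lem03}, gives the local $V_2$-regularity on compactly included subdomains; and the weak equation $\mathcal{P}G(\cdot,\cdot,y,s)=0$ on $(\tilde D\cap\Omega)\times\tilde I$ follows from the definition of the Green's function since $(y,s)\notin \tilde Q$. Applying \cref{lem06} with $M:=1/\epsilon$, valid for $\epsilon$ small enough to satisfy the lower bound on $M$ required there, I would then obtain a single finite-dimensional subspace $W\subset\mathscr{X}(\tilde Q_1)$, independent of $(y,s)$, of dimension at most $k_\epsilon$ satisfying
\[\dist_{L^2(\tilde Q_1)}(u,W)\leq \epsilon\,\|u\|_{L^2(\tilde Q)},\qquad u\in\mathscr{X}(\tilde Q).\]

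To extract the separable form, I would fix an $L^2(\tilde Q_1)$-orthonormal basis $\{u_1,\dots,u_k\}$ of $W$, define $v_i(y,s):=\langle G(\cdot,\cdot,y,s), u_i\rangle_{L^2(\tilde Q_1)}$, and set $G_k(x,t,y,s):=\sum_{i=1}^k u_i(x,t)\,v_i(y,s)$. By construction, $G_k(\cdot,\cdot,y,s)$ is the $L^2(\tilde Q_1)$-orthogonal projection of $G(\cdot,\cdot,y,s)$ onto $W$, so the previous display combined with $Q_X\subseteq \tilde Q_1$ and $\tilde Q\subseteq \hat Q_X$ immediately yields \cref{eq1624sat}. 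The main obstacle is the geometric bookkeeping of the enlargement in the first two steps: because $m$ is asymmetric (linear in space, square-root in rescaled time), the cube-interval $\tilde Q$ from \cref{lem06}, whose temporal length scales as the square of the spatial enlargement, must be aligned with the parabolic metric so that the admissibility buffer of size $\rho d$ absorbs both the $(\rho/2)d$ inflation to $\tilde Q$ and the $(\rho/2)d$ inflation to $\hat Q_X$ without losing the constant $c_{\rho/2}$. The three $\mathscr{X}$-membership checks are then routine consequences of the Green's function construction and the Cacciopolli-type estimate \cref{lem03}.
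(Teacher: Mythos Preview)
Your proposal is correct and follows essentially the same route as the paper: apply \cref{lem06} with $M=1/\epsilon$ and parameter $\rho/2$ to obtain a single subspace $W$ depending only on the geometry, observe that each slice $G(\cdot,\cdot,y,s)$ lies in $\mathscr{X}$ on the enlarged domain because $(y,s)$ stays at distance at least $(\rho/2)d$, and read off the separable form from a basis of $W$. You are in fact slightly more careful than the paper on two points: you make explicit the embedding of $Q_X$ into an $\ell^\infty$ cube-interval (needed because \cref{lem06} is stated for cubes while the theorem only assumes $D_1$ convex), and you spell out the three $\mathscr{X}$-membership checks and the orthogonal-projection definition of the $v_i$, which the paper leaves implicit.
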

\begin{proof}
Let $\epsilon_0=e^{-2(\lceil{(1+\rho)d/\delta_0}\rceil +1)}$ with $\delta_0$ defined in~\cref{lem05}, $I_1 = (t_0-\beta d^2/2,t_0+\beta d^2/2)$, with $t_0>0$ and $\beta$ defined in \cref{eq_def_beta}, and $d=\max\{\diam Q_X,\diam Q_Y\}$. We also let $D=\{x\in \R^n,\,\dist(x,D_1\cap\Omega)\leq \frac{\rho d}{2}\}$ and $I=\{t_0-\beta d^2(1+\rho)^2/2,t_0+\beta d^2(1+\rho)^2/2\}$. Similarly to \cite{bebendorf2003existence}, we observe that because
$\dist(\hat{Q}_X,Q_Y)\geq \dist(D\times I,Q_Y)\geq \frac{\rho d}{2}>0$, the right-hand side $\norm{G(\cdot,\cdot,y,s)}_{L^2(\hat Q_X)}$ does not contain the singularity of $G$. 

According to~\cref{lem06}, with $M=\epsilon^{-1}$ and $\rho$ replaced with $\rho/2$, we can set $\set{u_1, \ldots, u_k}$ be the basis of the subspace $W \subset \mathscr{X}(D_1\times I_1)$ with $k=\dim W \le c_{\rho/2}^{n+2} \lceil{\log\frac{1}{\epsilon}}\rceil^{n+3} + \lceil{\log \frac{1}{\epsilon}}\rceil$. For any $(y,s) \in Q_Y$, the function $g_Y:=G(\cdot,\cdot,y,s)$ is in $\mathscr{X}(D\times I)$. Moreover, $g_Y=\hat g_Y+ r_Y$ holds with $\hat g_Y \in W$ and $\norm{r_Y}_{L^2(Q_X)} \le  \epsilon \norm{g_Y}_{L^2(\hat Q_X)}$. 
Then, expressing $\hat g_Y$ with the basis, we obtain $\smash{\hat g_Y=\sum_{i=1}^k v_i(y,s) u_i}$,
with coefficients $v_i(y,s)$ depending on $y$ and $s$. Since $(y,s)\in Q_Y$, the $v_i$ are functions defined on $Q_Y$. The function $G_k(x,t,y,s)=\sum_{i=1}^k u_i(x,t) v_i(y,s)$ satisfies the estimate \eqref{eq1624sat}.
\end{proof}
If we integrate~\cref{eq1624sat} over $(y,s)\in Q_Y$, we obtain the inequality stated in \cref{sec_main_result}:
\[
\norm{G-G_k}_{L^2(Q_X\times Q_Y)} \le \epsilon \norm{G}_{L^2(\hat{Q}_X\times Q_Y)}.
\]

\section{Learning Rate for Green's Functions Associated with Parabolic PDEs} \label{sec_learning_rate}
In this section, we combine \cref{thm01} and the generalization of the randomized SVD to HS operators~\citep{boulle2021learning,boulle2022generalization} to construct a global approximant of Green's functions associated with parabolic PDEs. We suppose that one can generate $N\geq 1$ arbitrary forcing terms $\{f_1,\ldots,f_N\}$ and observe the corresponding solutions $\{u_1,\ldots,u_N\}$ from an unknown parabolic PDE of the form of \cref{eq:parabolic} and its adjoint, and derive a learning rate by working out the number of training pairs needed to learn the Green's function within a prescribed tolerance. As discussed in \cref{sec_challenges}, there is an additional difficulty compared with the elliptic case~\citep{boulle2021learning} as Green's functions of parabolic operators are not guaranteed to be squared-integrable in spatial dimensions greater than two. Therefore, we prove the following theorem, which provides rigorous probability bounds for approximating the Green's function in the $L^1$-norm from a given number of forcing terms and solutions.

\begin{theorem} \label{thm_learning_rate}
Let $\Omega\subset\R^n$ be a domain satisfying the uniform interior cone condition, $\U=\Omega\times [0,T]$, $\epsilon>0$ sufficiently small, and $G$ be the Green's function associated with the parabolic operator $\mathcal{P}$ in \cref{eq:parabolic}. Then, there exists a randomized algorithm that can construct an approximation $\tilde{G}$ of $G$ using $\mathcal{O}(\epsilon^{-\frac{n+2}{2}}\log(1/\epsilon))$ many input-output pairs of \eqref{eq:parabolic} and its adjoint such that
\[\|G-\tilde{G}\|_{L^1(\U\times \U)} = \mathcal{O}(\Gamma_\epsilon^{-1/2}\epsilon)\,\|G\|_{L^1(\U\times \U)},\]
with probability $\geq 1-\mathcal{O}(\epsilon^{\log^{n+1}(1/\epsilon)})$. The quantity $\Gamma_\epsilon$ is defined in \cref{eq_gamma} and characterizes the quality of the training pairs to learn $G$.
\end{theorem}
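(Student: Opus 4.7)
My plan is to discretize $\U \times \U$ via a dyadic hierarchical tree $\mathcal{T}$ built using the parabolic metric $m$ from \cref{eq_def_beta}. At level $\ell$, the domain $\U$ is partitioned into spatio-temporal boxes of parabolic diameter $\sim 2^{-\ell}$ (spatial side $2^{-\ell}$, temporal length $\beta 2^{-2\ell}$), so box pairs $(Q_X, Q_Y)$ are classified as admissible according to \cref{def_adm_domains} as soon as they satisfy \cref{eq_adm_domain} but their parents do not. The refinement is stopped at the finest level $L \simeq \tfrac{1}{2}\log_2(1/\epsilon)$, so that non-admissible leaves have parabolic diameter $\sim \sqrt{\epsilon}$. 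A volume count in the parabolic metric, which assigns weight $r^{n+2}$ to a ball of radius $r$, yields $\mathcal{O}(\epsilon^{-(n+2)/2})$ non-admissible leaf pairs and $\mathcal{O}(\epsilon^{-(n+2)/2}\log(1/\epsilon))$ admissible blocks across all levels, the finest level being dominant.

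\textbf{Block-wise learning by randomized SVD.} For every admissible block $Q_X \times Q_Y \in \mathcal{T}$, \cref{thm01} furnishes a separable approximant of rank $k_\epsilon = \mathcal{O}(\log^{n+3}(1/\epsilon))$ with relative $L^2$ error $\epsilon$. I would realize this approximant constructively by applying the randomized SVD for HS operators of \cref{sec:rand_svd} to the restricted operator $(\F_{Q_Y} f)(x,t) = \int_{Q_Y} G(x,t,y,s) f(y,s)\d y \d s$, using forcings drawn from a Gaussian process whose covariance kernel concentrates on $Q_Y$; access to the adjoint, granted by \cref{ass_adjoint}, supplies the transposed probes needed to form the projector $\mtx{P}_\mtx{Y}$. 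The block factor $\gamma_k$ in \cref{eq:MainProbabilityBound} is aggregated over $\mathcal{T}$ into the global quality constant $\Gamma_\epsilon$. To keep the total query count at $\mathcal{O}(\epsilon^{-(n+2)/2}\log(1/\epsilon))$ rather than multiplying this by $k_\epsilon$, I would share a single bank of $\mathcal{O}(k_\epsilon)$ forcings across all admissible blocks at each fixed level, exploiting the fact that disjoint source cubes at the same scale interact only with well-separated target cubes.

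\textbf{From $L^2$ block bounds to a global $L^1$ estimate.} I would assemble $\tilde G \coloneqq \sum_{(Q_X,Q_Y)\in\mathcal{T}_{\mathrm{adm}}} \mathbbm{1}_{Q_X\times Q_Y}\,\tilde G_{Q_X,Q_Y}$, setting $\tilde G \equiv 0$ on non-admissible leaves, and split the error as
\[
\|G-\tilde G\|_{L^1(\U\times\U)} \le \sum_{(Q_X,Q_Y)\in\mathcal{T}_{\mathrm{adm}}} \|G-\tilde G_{Q_X,Q_Y}\|_{L^1(Q_X\times Q_Y)} + \|G\|_{L^1(\mathcal{T}_{\mathrm{non\text{-}adm}})}.
\]
For each admissible term, Cauchy--Schwarz introduces the parabolic volume of the block, and Moser's local maximum estimate \citep{lieberman1996second} allows the $L^2$ right-hand side $\|G\|_{L^2(\hat Q_X\times Q_Y)}$ appearing in \cref{thm01} to be controlled by an $L^1$ norm of $G$ on a slightly enlarged block --- the crucial device for circumventing the non-square-integrability of $G$ when $n>1$. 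The near-diagonal term is handled by the Gaussian upper bounds of \cite{cho2012global}, which render $G$ integrable in a neighbourhood of its singularity; the choice $L\simeq \tfrac{1}{2}\log_2(1/\epsilon)$ ensures that the combined parabolic volume of non-admissible leaves contributes $\mathcal{O}(\epsilon)\|G\|_{L^1(\U\times\U)}$.

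\textbf{Probability budget and main obstacle.} A union bound over the $\mathcal{O}(\epsilon^{-(n+2)/2}\log(1/\epsilon))$ admissible blocks against the per-block failure probability in \cref{eq:MainProbabilityBound}, with the tail parameters $s,t$ taken of order $\log^{(n+1)/2}(1/\epsilon)$, gives success probability $1 - \mathcal{O}(\epsilon^{\log^{n+1}(1/\epsilon)})$. The hardest step, in my view, is the $L^2 \to L^1$ translation without loss of $\epsilon$-powers: one must choose the enlargements $\hat Q_X$ and the Moser exponents so that the sum of block contributions telescopes into $\|G\|_{L^1(\U\times\U)}$ and not a larger $L^p$-quantity, and simultaneously ensure that the per-block Gaussian processes can be synthesized from a single global covariance kernel so that $\Gamma_\epsilon$ is well-defined and the batch of shared forcings at each level remains of size $\mathcal{O}(k_\epsilon)$.
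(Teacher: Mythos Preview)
Your overall architecture --- hierarchical admissible/non-admissible partition in the parabolic metric with depth $L\simeq\tfrac12\log_2(1/\epsilon)$, randomized SVD of rank $k_\epsilon=\mathcal{O}(\log^{n+3}(1/\epsilon))$ on each admissible block via \cref{thm01}, Gaussian bounds of \cite{cho2012global} on the non-admissible leaves, H\"older plus Moser's local maximum estimate to convert block $L^2$ errors into $L^1$, and a union bound for the probability --- is exactly the route the paper takes.

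The one genuine gap is your device for reaching the query count $\mathcal{O}(\epsilon^{-(n+2)/2}\log(1/\epsilon))$. You propose to ``share a single bank of $\mathcal{O}(k_\epsilon)$ forcings across all admissible blocks at each fixed level''. This does not work as stated: if a forcing is supported on several disjoint source boxes $Q_Y$ simultaneously, the observed response on a target box $Q_X$ is the superposition of the admissible contribution and contributions from the neighbouring non-admissible $Q_Y$'s (which are not yet resolved at that level), so you cannot isolate the block operator you need for the randomized SVD. You flag this as the main obstacle, but there is no mechanism to overcome it. The paper avoids the issue entirely: it uses \emph{independent} forcings on every admissible block, yielding a raw count of $\mathcal{O}(|P_{\rm adm}|\,k_\epsilon)=\mathcal{O}(\epsilon^{-(n+2)/2}\log^{n+3}(1/\epsilon))$ pairs and an error $\mathcal{O}(\Gamma_\epsilon^{-1/2}\log^{(n+3)/2}(1/\epsilon)\,\epsilon)$, and then performs the change of variable $\tilde\epsilon\coloneqq\epsilon\log^{(n+3)/2}(1/\epsilon)$ at the very end. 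In the new variable the error reads $\mathcal{O}(\Gamma_{\tilde\epsilon}^{-1/2}\tilde\epsilon)$ and the sample count becomes $\mathcal{O}(\tilde\epsilon^{-(n+2)/2}\log(1/\tilde\epsilon))$, which is the statement of the theorem. This bookkeeping trick is the missing ingredient in your plan.

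A smaller point: for the probability budget the paper does not let $s,t$ grow; it fixes $t=e$, $s=3$ and takes the oversampling parameter $p=k_\epsilon$, so the per-block failure is $\leq 2e^{-k_\epsilon}$. Since $k_\epsilon\sim\log^{n+3}(1/\epsilon)$, this already beats the $\mathcal{O}(\epsilon^{-(n+2)/2})$ blocks in the union bound and delivers the stated success probability after the same change of variable.
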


The proof of the theorem occupies the rest of the section. It exploits the regularity result of Green's functions on admissible domains stated in \cref{thm01}, and standard Gaussian bounds near the diagonal $\mathcal{D}=\{(x,t,y,s)\in \U\times \U,\, (x,t)=(y,s)\}$.

\subsection{Hierarchical Partition of the Temporal Domain} \label{sec_hier}

We start the proof of \cref{thm_learning_rate} by constructing a hierarchical partition of the domain $\U\times \U$ into admissible and non-admissible domains. We aim to obtain a partition such that the vast majority of the subdomains are admissible, while the remaining non-admissible domains all have small areas. In this way, we can obtain accurate low-rank approximations on admissible domains by combining \cref{thm01} with the randomized SVD (see \cref{sec:rand_svd}), and safely neglect Green's functions on non-admissible domains. 

Without loss of generality, we assume that $\U=\Omega\times I$, where $\Omega\subset D=[0,1]^n$, $I=[0,1]$, and $\beta=1$ in \cref{eq_def_beta}. Otherwise it's straightforward to shift and scale $\Omega$ and $[0,T]$ by adjusting $\beta$. We construct a partition of $\U\times \U$ such that if $Q_X\times Q_Y$ is an element of the partition, where $Q_X = (\Omega\cap D_x)\times I_X$, then 
\begin{equation} \label{eq_size_partition}
\diam(D_X)=\diam(I_X)^{1/2}=\diam(Q_X)=\diam(Q_Y).
\end{equation}
This condition is determined by the metric defined in \cref{eq_def_beta} and guarantees that a large proportion of the domains in the partition are admissible. In this setting, choosing $\rho = 1$ in \cref{eq_adm_domain} is convenient for the definition of admissible sets.

First, we define a hierarchical partition of $D\times I$ for an arbitrary $n_\epsilon\geq 0$ number of partition levels using an octree-type structure. At each level of the partition, the spatial domain is divided into $2^n$ domains while the temporal domain is divided into $4$ subdomains so that \cref{eq_size_partition} is satisfied. The tree structure of the partition is defined as follows.
\begin{itemize}
\item At the level $L=0$, the domain $\smash{I_{1, \ldots, 1} \coloneqq \underbrace{I_1\times\ldots\times I_1}_{n\text{ times}}\times I_1=[0,1]^n\times [0,1]}$ is the root of the partitioning tree.
\item At a given level $0 \le L \le n_\epsilon-1$, if $I_{j_1,\ldots ,j_n,j_{n+1}}$ is a node of the tree, then it has $4\times 2^n$ children of the form
\[I_{2j_1+k_1,\ldots ,2j_n+k_n,4j_{n+1}+k_{n+1}},\quad k_1,\ldots k_n\in \{0,1\},\,k_{n+1}\in\{0,\ldots 3\}.\]
Here, if $I_{j_1} = [a,b]\subset [0,1]$, then $I_{2j_1} = [a,(a+b)/2]$ and $I_{2j_1+1} = [(a+b)/2, b]$. The division of the temporal interval $I_{j_{n+1}}$ into four subintervals is performed similarly.
\end{itemize}
The tree structure of the hierarchical partition in spatial dimension $n=1$ is displayed in \cref{fig_tree_partition} along with examples of admissible subdomains.

\begin{figure}[htbp]
\vspace{0.2cm}
\centering
\begin{overpic}[width=\linewidth]{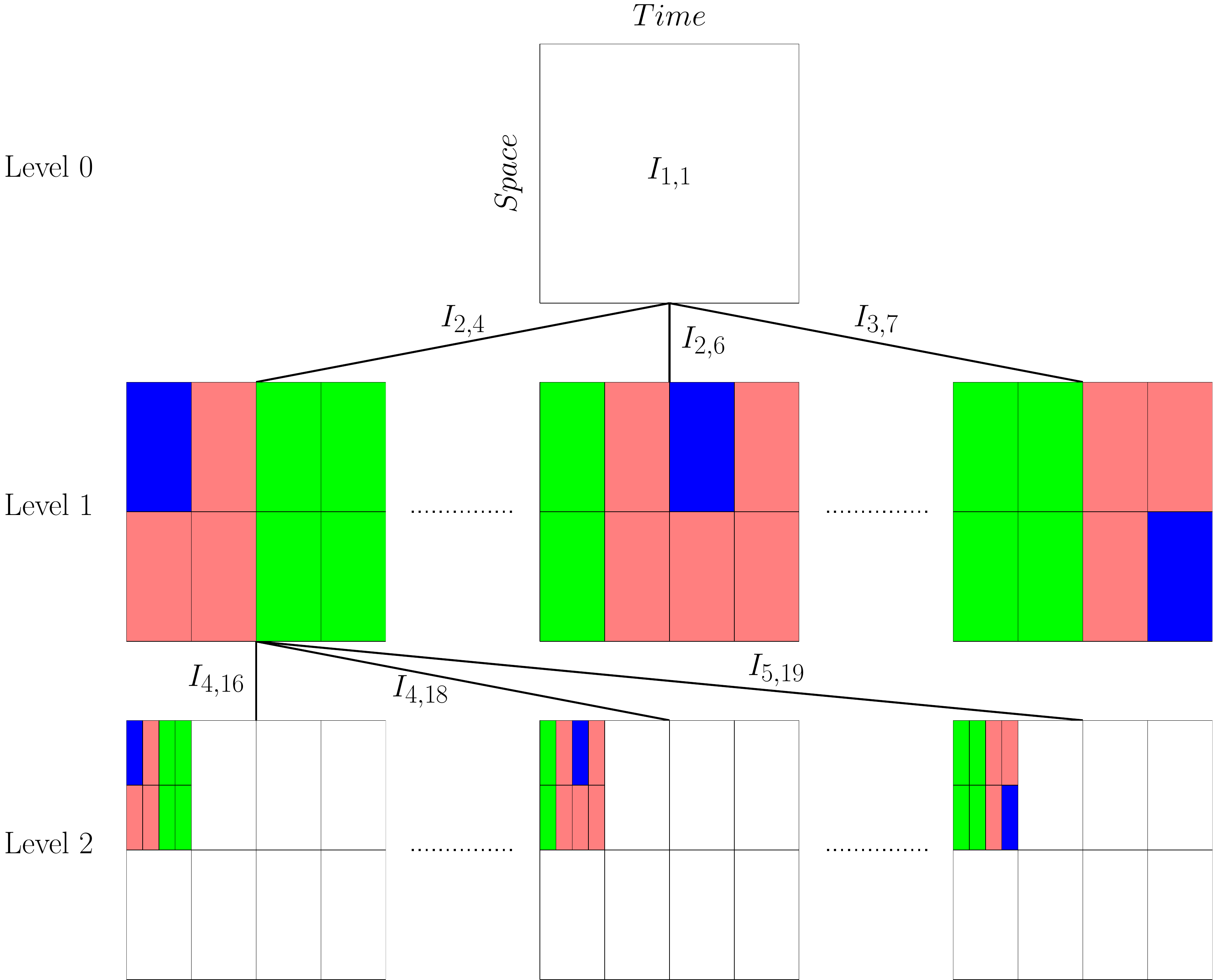}
\end{overpic}
\caption{Hierarchical partition of the domain $[0,1]\times [0,1]$, where the spatial domains are divided into $2$ at each level, and the temporal domains are divided into $4$. At the levels $1$ and $2$ of the tree, the domains coloured in red (resp.~green) are non-admissible (resp.~admissible) for the blue domain.}
\label{fig_tree_partition}
\end{figure}

Using the partition of $D\times I$, we can define a tree structure for $(D\times I)\times (D\times I)$ and cluster the tree nodes into admissible and non-admissible sets, respectively denoted by $P_{\textup{adm}}$ and $P_{\textup{non-adm}}$. These two sets also allow us to design a hierarchical partition of the domain $(D\times I)\times (D\times I)$.
\begin{itemize}
\item At the level $L=0$, the root of the tree is given by the domain $I_{1,\ldots 1}\times I_{1,\ldots 1}$, which belongs to the non-admissible set as it does not satisfy \cref{eq_adm_domain}.
\item At a given level $0 < L\leq n_\epsilon-1$, if $I_{j_1,\ldots ,j_{n+1}}\times I_{\tilde{j}_1,\ldots ,\tilde{j}_{n+1}}$ is a node of the tree, then it is either in the non-admissible set if all the respective indices are separated by at most one, i.e.,~$|j_1-\tilde{j}_1|\leq 1,\ldots,|j_{n+1}-\tilde{j}_{n+1}|\leq 1$, or labeled as admissible otherwise. If the node is admissible then it is added to the hierarchical partition. Otherwise, we subdivide it into $(4\times 2^n)^2$ children using cross-products of the $4\times 2^n$ children of $I_{j_1,\ldots ,j_{n+1}}$ and $I_{\tilde{j}_1,\ldots ,\tilde{j}_{n+1}}$ in the partition of $D\times [0,1]$.
\item At the final level $L=n_\epsilon$, we add both the admissible and non-admissible domains to the partition.
\end{itemize}

\begin{figure}[htbp]
\vspace{0.5cm}
\centering
\begin{overpic}[width=\linewidth]{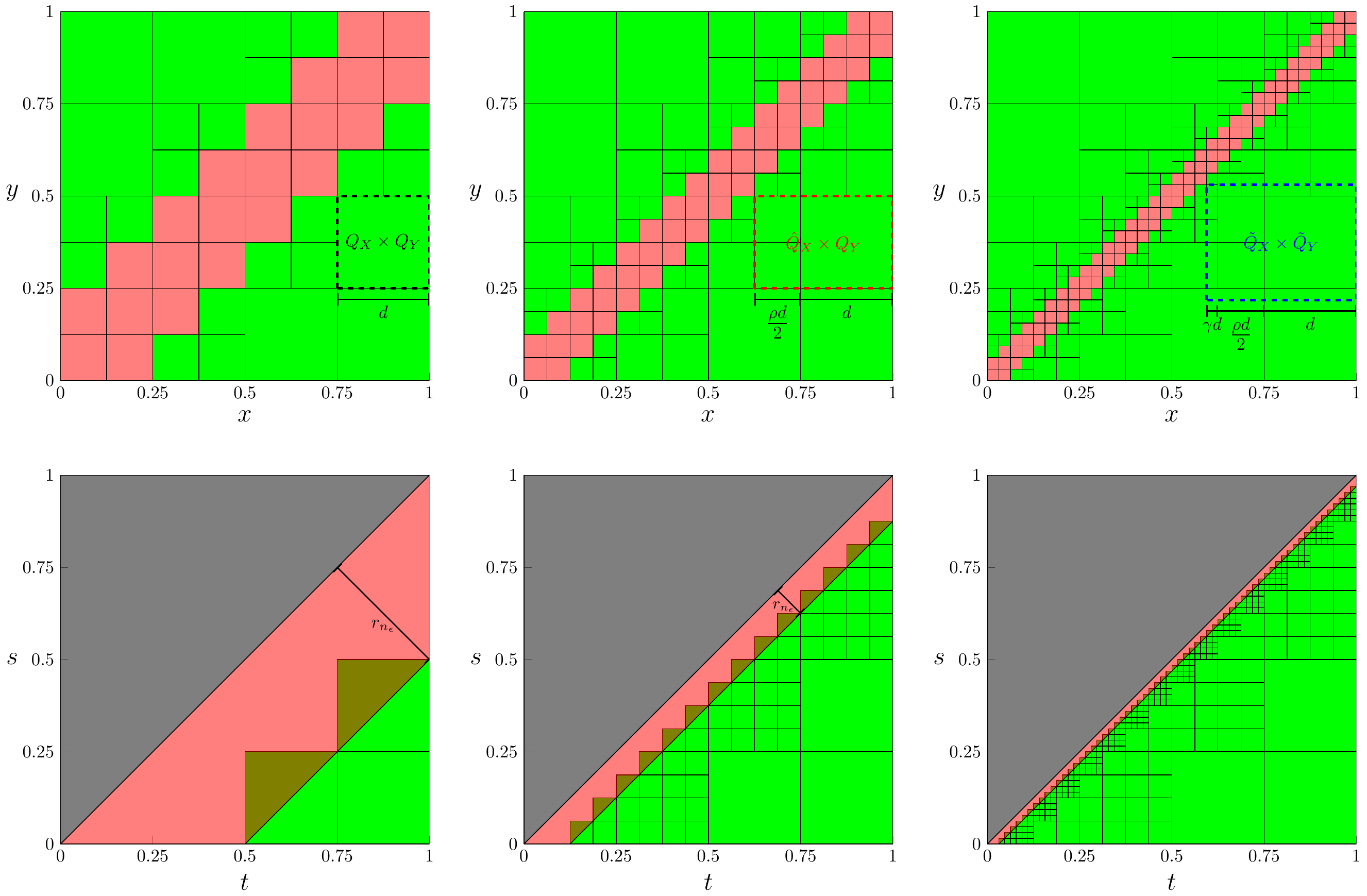}
\put(0,66){(a)}
\put(14,32){Level 2}
\put(48,32){Level 3}
\put(82,32){Level 4}
\put(14,66){Level 3}
\put(48,66){Level 4}
\put(82,66){Level 5}
\put(0,33){(b)}
\end{overpic}
\caption{(a) Illustration of level $3,4$, and $5$ of the hierarchical partition of the spatial domain $[0,1]\times [0,1]$. The green blocks are admissible domains with $\rho=1$ while the pink domains are non-admissible. The left to right panels display one admissible domain $Q_X\times Q_Y$ along with the elongated domains $\hat{Q}_X\times Q_Y$ (dashed red rectangle) and $\tilde{Q}_X\times Q_Y$ (dashed blue rectangle) appearing in the bounds of \cref{sec:green_entire}. (b) Illustration of the hierarchical partition of the temporal domain. At each level the non-admissible blocks are separated into $4^2$ domains. For partition level $n_\epsilon$, $r_{n_\epsilon}$ is the width of the non-admissible region.}
\label{fig_proof}
\end{figure}

In \cref{fig_proof}, we illustrate slices of the spatial and temporal partition of the domain $(D\times I)\times (D\times I)$ when $D = [0,1]$. The regions coloured in green are admissible while the ones coloured in red are non-admissible. In addition, the grey area in \cref{fig_proof}(b) shows the domain on which the Green's function is zero. At the final level $n_\epsilon$, all the non-admissible domains have the same diameter and, if $Q_X\times Q_Y$ is non-admissible with $Q_X = D_X\times I_X$, we have $\diam(Q_X) = \diam(D_X) = \diam(I_X)^{1/2} = 2^{-n_\epsilon}$. Therefore, the width of the non-admissible temporal region (see~\cref{fig_proof}(b)) is given by 
\begin{equation} \label{eq_size_domain}
r_{n_\epsilon}=\sqrt{2}\times 2^{-2n_\epsilon}.
\end{equation}

We now compute an upper bound on the number of admissible domains in the partition. By construction, the number of non-admissible regions in the hierarchical partition satisfies
\begin{equation} \label{eq_non_adm_dom}
|P_{\textup{non-adm}}| = (3\times 4^{n_\epsilon}-2)\times(3\times 2^{n_\epsilon}-2)^n.
\end{equation}
Moreover, the number of new admissible domains, $|P_{\textup{adm}}(L)|$, added to the partition at a given level $1\leq L\leq n_\epsilon$ is given by the number of children of the non-admissible domains at the previous level minus the number of non-admissible sets at the current level, i.e.,
\begin{equation} \label{eq_adm_dom_L}
|P_{\textup{adm}}(L)| = (4\times 2^n)^2 (3\times 4^{L-1}-2)(3\times 2^{L-1}-2)^n-(3\times 4^{L}-2)(3\times 2^{L}-2)^n,
\end{equation}
where the number of non-admissible sets is given by \cref{eq_non_adm_dom}. We can then compute the total number of admissible domains, $|P_{\textup{adm}}|$ in the partition by summing \cref{eq_adm_dom_L} over $1\leq L\leq n_\epsilon$ and obtain
\begin{align*}
|P_{\textup{adm}}| &= \sum_{L=1}^{n_\epsilon} 2^{2n+4} (3\times 4^{L-1}-2)(3\times 2^{L-1}-2)^n- (3\times 4^{L}-2)(3\times 2^{L}-2)^n\\
&= 1 - (3\times 4^{n_\epsilon}-2)(3\times 2^{n_\epsilon}-2)^n
+(2^{2n+4}-1)\sum_{L=0}^{n_\epsilon-1}  (3\times 4^{L}-2)(3\times 2^{L}-2)^n.
\end{align*}
We can bound $|P_{\textup{adm}}|$ by computing the sum of a geometric series as
\[\sum_{L=0}^{n_\epsilon-1}  (3\times 4^{L}-2)\times(3\times 2^{L}-2)^n\leq 3^{n+1}\sum_{L=0}^{n_\epsilon-1}(2^{n+2})^L=3^{n+1}\frac{2^{(n+2)n_\epsilon}-1}{2^{n+2}-1}\leq \left(\frac{3}{2}\right)^{n+1}\!\!2^{(n+2)n_\epsilon}.\]
The number of admissible domains is bounded by
\begin{equation} \label{eq_number_adm}
|P_{\textup{adm}}| \leq 2^{2n+4}3^{n+1}2^{-(n+1)}2^{(n+2)n_\epsilon}= 24\times 6^n 2^{(n+2)n_\epsilon}.
\end{equation}
We conclude the construction of the hierarchical partition of $\U\times \U$ by intersecting each element of the partition with $(\Omega\times I)\times (\Omega\times I)$. In the following section, we assume that we have already constructed a partition of $\U\times\U$ for a general bounded domain $\Omega\subset\R^n$ and $I=[0,T]$. In fact, the number of admissible domains and size of the non-admissible region (cf.~\cref{eq_number_adm,eq_size_domain}) remain the same up to a constant that depends on the size of the domain $\U$.

\subsection{Diagonal Estimate of Green's Functions} \label{sec_diagonal}

This section determines the number of hierarchical partitioning levels needed to neglect the Green's function on the non-admissible regions of the partition of $\U\times\U$ defined in \cref{sec_hier}. To start, we use a global Gaussian estimate for Green's functions associated with parabolic PDEs, which guarantees the existence of a positive constant $C>0$ such that the Green's function is positive and bounded by~\citep[Eq.~4.2]{cho2012global}
\begin{equation} \label{eq_gaussian_diag}
G(x,t,y,s)\leq C\frac{\Theta(t-s)}{(t-s)^{n/2}}\exp\left(-\frac{\kappa |x-y|^2}{t-s}\right),\quad (x,t)\neq(y,s)\in\U,
\end{equation}
where $\kappa>0$ is a constant depending on the uniform parabolicity constants~\eqref{eq_uni_parab} and is independent of $T$. We remark that the Green's function is zero if $t\leq s$, as can be seen in the gray regions of~\cref{fig_proof}(b). For a given diameter $0<r_t\leq T$, we define a diagonal subdomain of $\U\times \U$ as
\[\D_{r_t}\coloneqq \{(x,t,y,s)\in \U\times \U,\, |t-s|<r_t\}.\]
We use the estimate~\eqref{eq_gaussian_diag} to bound the Green's function on the domain $\D_{r_t}$ in the $L^p$-norm, for $p\geq 1$.

\begin{proposition} \label{lem_diag_bound}
Let $p\geq 1$, $0<r_t\leq T$, and assume that $n(p-1)<2$. Then, there exists a constant $C_{\textup{diag}}=C_{\textup{diag}}(\Omega,T,p,\kappa)>0$ such that
\[\|G\|_{L^p(\D_{r_t})}\leq C_{\textup{diag}} r_t^{[1-n(p-1)/2]/p}\|G\|_{L^p(\U\times \U)}.\]
\end{proposition}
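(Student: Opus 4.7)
The plan is to exploit the Gaussian upper bound~\eqref{eq_gaussian_diag} directly and compute the resulting integral on $\D_{r_t}$ in closed form. After raising the bound to the $p$th power, we obtain
\[
G(x,t,y,s)^p \leq C^p\,\frac{\Theta(t-s)}{(t-s)^{np/2}}\,\exp\!\left(-\frac{p\kappa\,|x-y|^2}{t-s}\right),
\]
so the integral over $\D_{r_t}$ factors nicely: first integrate in $y$, then in $x$, and finally in the time variables.

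For fixed $x,t,s$ with $0<t-s<r_t$, the Gaussian integral in $y$ is bounded by its value on all of $\mathbb{R}^n$, which equals $(\pi(t-s)/(p\kappa))^{n/2}$. Integrating this estimate in $x\in\Omega$ contributes a factor of $|\Omega|$, yielding
\[
\int_\Omega\!\int_\Omega G(x,t,y,s)^p\,dy\,dx \;\leq\; C^p|\Omega|\left(\frac{\pi}{p\kappa}\right)^{\!n/2}(t-s)^{-n(p-1)/2}\,\Theta(t-s).
\]
Next, the temporal integration over $\{(t,s)\in[0,T]^2:0<t-s<r_t\}$ reduces, via the substitution $\tau=t-s$, to $T\int_0^{r_t}\tau^{-n(p-1)/2}\d\tau$, and this is where the hypothesis $n(p-1)<2$ is essential: it makes the exponent $-n(p-1)/2$ strictly greater than $-1$, so the integral converges and equals $T\,r_t^{1-n(p-1)/2}/(1-n(p-1)/2)$. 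Combining these three integrations and taking $p$th roots yields a bound of the form
\[
\|G\|_{L^p(\D_{r_t})} \;\leq\; C_1\, r_t^{[1-n(p-1)/2]/p},
\]
with $C_1=C_1(\Omega,T,p,\kappa)$.

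To convert this absolute bound into the stated multiplicative bound involving $\|G\|_{L^p(\U\times\U)}$, I would invoke a matching Gaussian lower bound for parabolic Green's functions under the uniform parabolicity assumption~\eqref{eq_uni_parab}, which is available from the Aronson/Nash-type estimates referenced in~\citep{cho2012global}. Such a two-sided estimate gives a constant $c_0=c_0(\Omega,T,p,\kappa)>0$ with $\|G\|_{L^p(\U\times\U)}\geq c_0$; setting $C_{\textup{diag}}=C_1/c_0$ then yields the claim. The main (mild) obstacle is confirming that the lower Gaussian bound holds uniformly in the stated parameter class; this is standard but relies on citing the appropriate result rather than deriving it, and it is the only step that is not a direct consequence of~\eqref{eq_gaussian_diag} and Fubini. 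Everything else is a short computation controlled by the convergence condition $n(p-1)<2$.
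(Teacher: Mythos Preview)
Your proposal is correct and follows essentially the same approach as the paper: raise the Gaussian upper bound~\eqref{eq_gaussian_diag} to the $p$th power, integrate the spatial Gaussian over~$\R^n$, and use the hypothesis $n(p-1)<2$ to make the remaining time integral $\int_0^{r_t}\tau^{-n(p-1)/2}\,d\tau$ converge, yielding an absolute bound $\|G\|_{L^p(\D_{r_t})}\le C_1\,r_t^{[1-n(p-1)/2]/p}$. The paper is in fact less explicit than you are about the final step of converting this absolute bound into the stated relative one---it simply asserts that such a constant $C_{\textup{diag}}(\Omega,T,p,\kappa)$ exists---whereas you correctly identify that a uniform positive lower bound on $\|G\|_{L^p(\U\times\U)}$ (via Aronson-type lower Gaussian estimates) is what is needed.
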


\begin{proof}
Let $y\in\Omega$, $s\in (0,T)$, and denote $I_s = (s,\min(s+r_t,T))$. Integrating \cref{eq_gaussian_diag}, raised to the power $p$, on the domain $\Omega\times I_s \subset \U$ yields the following inequality,
\begin{align*}
\int_{I_s}\int_{\Omega} |G(x,t,y,s)|^p\d x\d t &\leq \int_{I_s}\int_{\Omega}\frac{(\Theta(t-s))^pC^p}{(t-s)^{pn/2}}\exp\left\{-p\frac{\kappa |x-y|^2}{t-s}\right\}\d x\d t\\
&\leq C^p\int_{0}^{r_t} \tilde{t}^{-pn/2} \int_{\R^n} e^{-p \kappa |x|^2/\tilde{t}}\d x\d\tilde{t},
\end{align*}
where we use the change of variables $\tilde{t} = t-s$. We then make the change of variables $\tilde{x} = x\sqrt{p\kappa/\tilde{t}}$ to obtain
\begin{align*}
\|G(\cdot, \cdot, y, s)\|_{L^p(\Omega\times I_s)}^p &\leq C^p\int_{0}^{r_t} \tilde{t}^{-pn/2} \left(\frac{\tilde{t}}{p}\right)^{n/2}\!\!\!\int_{\R^n} \!\! e^{- |\tilde{x}|^2}\d \tilde{x}\d \tilde{t} \leq C^p\left(\frac{\pi}{p}\right)^{n/2} \!\!\!\int_{0}^{r_t}\!\!\tilde{t}^{-n(p-1)/2}\d \tilde{t}\\
&\leq \left(\frac{\pi}{p}\right)^{n/2}\!\! \frac{C^p}{1-n(p-1)/2}r_t^{1-n(p-1)/2}.
\end{align*}
Integrating this expression over $y\in\Omega$ and $s\in (0,T)$ yields
\[\|G\|_{L^p(\D_{r_t})}^p\leq \left(\frac{\pi}{p}\right)^{n/2} \!\! \frac{|\Omega| T C^p}{1-n(p-1)/2}r_t^{1-n(p-1)/2}.\]
If $n(p-1)<2$, then the Green's function is in $L^p(\U\times \U)$, and we can introduce a constant $C_{\text{diag}}=C_{\text{diag}}(\Omega,T,p,\kappa)>0$, such that
\[\|G\|_{L^p(\D_{r_t})}\leq C_{\text{diag}} r_t^{[1-n(p-1)/2]/p} \|G\|_{L^p(\U\times \U)},\]
which concludes the proof.
\end{proof}

We conclude that the Green's function restricted to the domain $\D_{r_t}$ has a relative small norm if $r_t$ is small. Applying \cref{lem_diag_bound} with the parameter $p=1$ gives the following $L^1$ estimate:
\begin{equation} \label{eq:green_1D_norm_bound}
\|G\|_{L^1(\D_{r_t})}\leq C_{\text{diag}} r_t\|G\|_{L^1(\U\times \U)}.
\end{equation}
Due to the hierarchical partition of the temporal domain, we can easily bound the norm of $G$ on non-admissible sets by using a temporal radius of $r_t = r_{n_\epsilon}=\sqrt{2}\times 2^{-2n_\epsilon}$ up to a constant depending on the size of the domain $\U$, where $n_\epsilon$ is the number of hierarchical levels (see.~\cref{eq_size_domain}). Then, since $P_{\text{non-adm}}\subset \D_{r_t}$ as illustrated by \cref{fig_proof}, \cref{eq:green_1D_norm_bound} yields
\begin{equation} \label{eq_bound_G_non_adm}
\|G\|_{L^1(P_{\text{non-adm}})} \le \sqrt{2}C_{\text{diag}} 2^{-2n_\epsilon}\|G\|_{L^1(\U \times \U)}\leq \epsilon \|G\|_{L^1(\U\times\U)},
\end{equation}
where we choose $n_\epsilon\sim (1/2)\log_2(1/\epsilon)$ hierarchical levels such that $\sqrt{2}C_{\text{diag}} 2^{-2n_\epsilon}\leq \epsilon$. This means that we can safely approximate $G$ with the zero approximant on non-admissible domains, and still get an approximation of $G$ within a relative accuracy of $\epsilon$ in the $L^1$ norm.

One might be able to slightly improve \cref{lem_diag_bound} by computing the integral of each non-admissible domain, similarly to the elliptic case~\citep{boulle2021learning}. However, the gain is expected to be marginal since the decay of the bound in \cref{eq_gaussian_diag} is essentially controlled by a well-separation of the temporal variables $t$ and $s$.

\subsection{Approximating Green's Functions on Admissible Domains} \label{sec:green_1D_adm}

The approximation of Green's functions on the well-separated domains of the partition of $\U\times \U$ is achieved using the randomized SVD for HS operators described in \cref{sec:rand_svd}. Let $Q_X\times Q_Y\in P_\text{adm}$ be an admissible domain and $k=k_\epsilon = c_{\rho/2}^{3} \lceil{\log\frac{1}{\epsilon}}\rceil^{n+3} + \lceil{\log \frac{1}{\epsilon}}\rceil$ be a target rank derived in~\cref{thm01}, we can combine \cref{thm01} and the Eckart--Young--Mirsky theorem~\citep[Thm.~4.4.7]{hsing2015theoretical}, which characterizes the best rank-$k$ approximation error to a HS operator, to bound the singular values of the Green's function restricted to $Q_X\times Q_Y$ by
\begin{equation} \label{eq:green_1D_sv} 
\left( \sum_{j=k_\epsilon+1}^\infty \sigma_{j,Q_X\times Q_Y}^2 \right)^{1/2} \le \|G-G_{k_\epsilon}\|_{L^2(Q_X \times Q_Y)} \le \epsilon \|G\|_{L^2(\hat{Q}_X \times Q_Y)}, 
\end{equation}
where $\sigma_{j,Q_X\times Q_Y}$ are the singular values of $G$ restricted to $Q_X\times Q_Y$. We conclude that the singular values of $G$ decay rapidly to $0$ on admissible domains.

With the rapidly decaying singular values, we can follow the arguments in~\citep[Sec.~4.1.2]{boulle2021learning} to use the randomized SVD for learning Green's functions on admissible sets. Roughly speaking, we start with a Gaussian process on $\U$ and define a covariance kernel $K$ that restricts onto $Q_Y \times Q_Y$. We then extend the restricted operator by $0$ on $\U\times \U$ and apply the randomized SVD. As a result, with a target rank of $k_\epsilon=c_{\rho/2}^{3} \lceil{\log\frac{1}{\epsilon}}\rceil^{n+3} + \lceil{\log \frac{1}{\epsilon}}\rceil$, an oversampling parameter $p = k_\epsilon$, and $t=e$, we combine \cref{thm01,eq:green_1D_sv} to obtain an approximant $\tilde{G}_{X\times Y}$ of $G$ on $Q_X\times Q_Y$ such that
\begin{equation} \label{eq:green_norm_adm_exp} 
\|G-\tilde{G}_{X\times Y}\|_{L^2(Q_X \times Q_Y)} \leq \left(1+se\sqrt{\frac{6k_\epsilon}{\gamma_{k_{\epsilon},Q_X\times Q_Y}}\frac{\Tr(K)}{\lambda_1}}\,\right)\epsilon \|G\|_{L^2(\hat{Q}_X \times Q_Y)},
\end{equation}
which holds with probability greater than $1-e^{-k_{\epsilon}}-e^{-k_\epsilon(s^2-2\log(s)-1)}\geq 1-2e^{-k_{\epsilon}}$ when $s\geq 3$. The factor $\gamma_{k_{\epsilon},Q_X\times Q_Y}$ characterizes the suitability of the covariance kernel for learning $G$ on the domain $Q_X\times Q_Y$. In this way, our algorithm requires $N_{\epsilon, X\times Y} = 2(k_\epsilon+p) =  \mathcal{O}\left(\log^{n+3}(1/\epsilon)\right)$ input-output pairs to learn an approximant to $G$ on $Q_X\times Q_Y$.

\begin{remark}
To apply the projection operator associated with the randomized approximation on the left of the HS operator in \cref{eq:MainProbabilityBound}, we need to solve the adjoint equation associated with \cref{eq:parabolic}, which is allowed by~\cref{ass_adjoint}.
\end{remark}

\subsection{Recovering the Green's Function on the Entire Domain} \label{sec:green_entire}

We can now recover the Green's function $G$ on the entire domain $\U \times \U$ and compute the number of input-output pairs needed to approximate it within accuracy $\epsilon>0$. With $n_\epsilon$ computed in \cref{sec_diagonal}, we can follow the arguments in~\citep[Sec.~4.4.1]{boulle2021learning} to quantify the total number of input-output pairs we need to approximate $G$ using the randomized SVD described in~\cref{sec:green_1D_adm}. In particular, we denote the worst $\gamma_{k_\epsilon,Q_X \times Q_Y}$ by
\begin{equation} \label{eq_gamma}
\Gamma_\epsilon = \min \{ \gamma_{k_\epsilon,Q_X \times Q_Y}, Q_X \times Q_Y \in P_{\rm{adm}} \},
\end{equation}
so that we need
\[ N_\epsilon = \mathcal{O}(|P_{\rm{adm}}| \log^{n+3}(1/\epsilon)) = \mathcal{O}(\epsilon^{-\frac{n+2}{2}}\log^{n+3}(1/\epsilon))\]
input-output pairs to capture the Green's function on admissible domains with $n_\epsilon\sim(1/2)\log_2(1/\epsilon)$ hierarchical levels (see~\cref{sec_diagonal}), and the number of admissible domains given by~\cref{eq_number_adm}.

We now provide an explicit bound for the approximation $\tilde{G}$ if we use zero approximant on non-admissible sets and learn with $N_\epsilon = \mathcal{O}(\epsilon^{-\frac{n+2}{2}}\log^{n+3}(1/\epsilon))$ many input-output pairs on admissible domains. First, we separate the norm error into error on admissible sets and that on non-admissible sets as
\begin{equation} \label{eq_global_norm}
\begin{aligned}
  \|G-\tilde{G}\|_{L^1(\U \times \U)} &\leq \|G-\tilde{G}\|_{L^1(P_{\rm{non-adm}})} + \|G-\tilde{G}\|_{L^1(P_{\rm{adm}})}\\
&\leq \epsilon \|G-\tilde{G}\|_{L^1(\U\times \U)} + \sum_{Q_X \times Q_Y \in P_{\rm{adm}}} \|G-\tilde{G}\|_{L^1(Q_X \times Q_Y)},
\end{aligned}
\end{equation}
where the second inequality comes from \cref{eq_bound_G_non_adm}.
Let $Q_X \times Q_Y\in P_{\rm{adm}}$, we focus on bounding the error on this subdomain with \cref{eq:green_norm_adm_exp}. Using H\"older's inequality, we have 
\[\norm{G(\cdot,\cdot,y,s)-G_k(\cdot,\cdot,y,s)}_{L^1(Q_X)} \le \abs{Q_X}^{1/2}\norm{G(\cdot,\cdot,y,s)-G_k(\cdot,\cdot,y,s)}_{L^2(Q_X)}, \quad (y,s)\in Q_Y,\]
which implies that
\begin{equation} \label{eq_L2_holder}
\|G-\tilde{G}\|_{L^1(Q_X \times Q_Y)}\leq \abs{Q_X}^{1/2}\abs{Q_Y}^{1/2}\|G-\tilde{G}\|_{L^2(Q_X \times Q_Y)}.
\end{equation}
We then apply \cref{eq:green_norm_adm_exp} to estimate the term $\|G-\tilde{G}\|_{L^2(Q_X \times Q_Y)}$ in \cref{eq_L2_holder} and bound the resulting right-hand side term $\|G\|_{L^2(\hat{Q}_X \times Q_Y)}$ by an $L^1$-estimate to complete the bound of the approximation error on admissible domains.
Let $\gamma>0$ be an arbitrary constant and define $\tilde{Q}_X\coloneqq \{X\in \U,\, \dist(X,\hat{Q}_X)\leq \frac{\gamma}{2}\diam \hat{Q}_X\}$.
We first remark that for $(y,s) \in Q_Y$, $G(\cdot,\cdot,y,s)$ satisfies $\mathcal{P} G(\cdot,\cdot,y,s)=0$ in $(\tilde D \cap \Omega)\times \tilde I$ and vanishes on $(\tilde D \cap \partial\Omega)\times \tilde I$, where $\tilde{Q}_X = (\tilde{D}\cap \Omega)\times\tilde{I}$. Therefore, by Moser's local maximum estimate~\citep[Thm.~6.30]{lieberman1996second}, we have
\begin{equation} \label{eq_moser_estimate}
\norm{G(\cdot,\cdot, y,s)}_{L^\infty(\hat Q_X)} \le C_1\abs{\tilde{Q}_X}^{-1} \norm{G(\cdot,\cdot,y,s)}_{L^1(\tilde{Q}_X)},
\end{equation}
where $C_1=C_1(n, \lambda, \Lambda, \gamma)>0$. \cref{eq_moser_estimate} implies that for all $(y,s) \in Q_Y$, we have
\begin{equation} \label{eq12.55sat}
\int_{\hat Q_X}\abs{G(x,t,y,s)}^2 \d x \d t \le C_1^2 \abs{\tilde{Q}_X}^{-2} \abs{\hat Q_X} \left(\int_{\tilde{Q}_X} \abs{G(x,t,y,s)} \d x \d t\right)^2.    
\end{equation}
By integrating over $(y,s) \in Q_Y$ and using an integral version of Minkowski's inequality, we obtain
\begin{multline}
\int_{Q_Y}\int_{\hat Q_X}\abs{G(x,t,y,s)}^2\d x \d t \d y \d s \le C_1^2 \abs{\tilde{Q}_X}^{-2} \abs{\hat Q_X} \int_{Q_Y}\left(\int_{\tilde{Q}_X} \abs{G(x,t,y,s)}\d x\d t\right)^2 \d y\d s \\
\le C_1^2 \abs{\tilde{Q}_X}^{-2} \abs{\hat Q_X} \left\{ \int_{\tilde{Q}_X}\left(\int_{Q_Y} \abs{G(x,t,y,s)}^2 \d y \d s\right)^{1/2} \d x \d t \right\}^2. \label{eq12.56sat}
\end{multline}
On the other hand, for $(x,t) \in \tilde{Q}_X$, $G(x,t,\cdot,\cdot)$ satisfies $\mathcal{P}^* G(x,t,\cdot,\cdot) =0$ in $Q_Y$, where $\mathcal{P}^*$ is the adjoint operator of $\mathcal{P}$~\citep{cho2008green}. Similarly to \cref{eq_moser_estimate,eq12.55sat}, we have
\begin{equation} \label{eq13.06sat}
\int_{Q_Y}\abs{G(x,t,y,s)}^2\d y \d s \le C_2^2 \abs{\tilde{Q}_Y}^{-2} \abs{Q_Y} \left(\int_{\tilde{Q}_Y} \abs{G(x,t,y,s)}\d y \d s\right)^2, 
\end{equation}
where $\tilde{Q}_Y\coloneqq \{Y\in \U,\, \dist(Y,Q_Y)\leq \frac{\gamma}{2}\diam Q_Y\}$ and $C_2>0$ is a constant. Combining \eqref{eq12.56sat} and \eqref{eq13.06sat} yields
\begin{equation*}
\int_{Q_Y}\int_{\hat Q_X}\!\! \abs{G(x,t,y,s)}^2\d x\d t \d y\d s \le C_1^2 C_2^2 \frac{ \abs{\hat Q_X} \abs{Q_Y}}{\abs{\tilde{Q}_X}^2\abs{\tilde{Q}_Y}^{2}} 
\left\{\int_{\tilde{Q}_X} \int_{\tilde{Q}_Y}\!\! \abs{G(x,t,y,s)}\d y \d s \d x \d t \right\}^2,
\end{equation*}
or equivalently,
\begin{equation} \label{eq13.17sat}
\norm{G}_{L^2(\hat Q_X\times Q_Y)}  \le C_1 C_2 \abs{\tilde{Q}_X}^{-1} \abs{\hat Q_X}^{1/2} \abs{\tilde{Q}_Y}^{-1} \abs{Q_Y}^{1/2}
\norm{G}_{L^1(\tilde{Q}_X\times \tilde{Q}_Y)}.
\end{equation}
Finally, the multiplication of \cref{eq13.17sat} by the term $\abs{Q_X}^{1/2}\abs{Q_Y}^{1/2}$ from \cref{eq_L2_holder} yields
\[\abs{Q_X}^{1/2}\abs{Q_Y}^{1/2} \norm{G}_{L^2(\hat Q_X\times Q_Y)}\leq C \norm{G}_{L^1(\tilde{Q}_X\times \tilde{Q}_Y)},\]
where $C>0$ is a constant, because $Q_X\subset \hat{Q}_X\subset\tilde{Q}_X$ and $Q_Y\subset\tilde{Q}_Y$. 

We then choose $\gamma=1/16$ so that the domain $\tilde{Q}_X\times \tilde{Q}_Y$ is included in a finite number, $C_n$, of neighbors in the hierarchical partition of the domain including itself (see \cref{fig_proof}(a)). Combining~\cref{eq_global_norm,eq:green_norm_adm_exp} and the $L^1$ argument described in the previous paragraph, we conclude that
\begin{equation} \label{eq_final_L1}
\begin{aligned}
\|G-\tilde{G}\|_{L^1(\U \times \U)} &\leq \epsilon \|G\|_{L^1(\U \times \U)} + \sum_{Q_X \times Q_Y \in P_{\rm{adm}}} \|G-\tilde{G}\|_{L^1(Q_X \times Q_Y)} \\
&\leq \epsilon \|G\|_{L^1(\U \times \U)} + \sum_{Q_X \times Q_Y \in P_{\rm{adm}}}  s C_{\text{svd}}  k_\epsilon^{1/2}\Gamma_{\epsilon}^{-1/2}\epsilon\|G\|_{L^1(\tilde{Q}_X \times \tilde{Q}_Y)}\\
&\le s (2C_n+1) C_{\text{svd}}  k_\epsilon^{1/2}\Gamma_{\epsilon}^{-1/2}\epsilon\|G\|_{L^1(\U \times \U)},
\end{aligned}
\end{equation}
where $C_{\text{svd}}>0$ is a constant.

Finally, we choose $s=3$ to obtain a probability of failure of the randomized SVD less than $2e^{-k_\epsilon}$ on each admissible domain (cf.~\cref{sec:green_1D_adm}). Following \cref{eq_final_L1}, as $\epsilon \rightarrow 0$, the global approximation error between $G$ and the constructed approximant $\tilde{G}$ on $\U \times \U$ satisfies
\[ \|G-\tilde{G}\|_{L^1(\U \times \U)} = \mathcal{O}(\Gamma_\epsilon^{-1/2}\log^{(n+3)/2}(1/\epsilon)\epsilon)\,\|G\|_{L^1(\U \times \U)}, \]
with probability $\geq (1-2e^{-k_\epsilon})^{24^{n+1}\times 2^{(n+2)n_\epsilon}}=
1-\mathcal{O}(\epsilon^{\log^{n+2}(1/\epsilon)-\frac{n+2}{2}})$. This indicates that $\tilde{G}$ is a good approximation of $G$ with high probability. We then make the change of variable $\tilde{\epsilon}\coloneqq \epsilon\log^{(n+3)/2}(1/\epsilon)$ to obtain the bound
\[ \|G-\tilde{G}\|_{L^1(\U \times \U)} = \mathcal{O}(\Gamma_{\tilde{\epsilon}}^{-1/2}\tilde{\epsilon})\,\|G\|_{L^1(\U \times \U)},\]
with probability $\geq 1-\mathcal{O}(\tilde{\epsilon}^{\log^{n+1}(1/\tilde{\epsilon})})$ for $\epsilon$ small enough. Note that the factor $\Gamma_{\tilde{\epsilon}}$ is changed according to its implicit definition given by \cref{eq_gamma}. As a result, the number of input-output pairs is given by
\[N_{\tilde{\epsilon}}=\mathcal{O}(\tilde{\epsilon}^{-\frac{n+2}{2}}\log^{(n+3)(1-\frac{n+2}{4})}(1/\tilde{\epsilon}))=\mathcal{O}(\tilde{\epsilon}^{-\frac{n+2}{2}}\log(1/\tilde{\epsilon})),\]
and we can drop the tilde symbol to conclude the proof of \cref{thm_learning_rate}.

\section{Summary and Discussion} \label{sec_conclusions}

We derive a rigorous learning rate for parabolic operators, by giving an upper bound on the number of training data needed to learn Green's functions within a prescribed relative accuracy. Our analysis relies on an extension of a result from~\cite{bebendorf2003existence} to show that Green's functions of parabolic operators admit low-rank properties on well-separated domains, i.e., away from the singularity near the diagonal. A similar low rank property is derived for elliptic operators in dimension three. This result may motivate the development of novel algorithms that use the hierarchical structures of parabolic operators to discretize time-dependent equations. One interesting outcome of this work is that the analysis and the resulting approximation error bounds are obtained using the $L^1$-norm since Green's functions of parabolic operators in spatial dimension greater than one are usually not square-integrable. This fact may partially explain the challenges met by the current deep learning techniques that attempt to learn the solution operators of time-dependent mathematical models using a quadratic loss function. Hence, \cite{Krishnapriyan2021mode} and~\cite{wang2022and} recently observed and analysed mode failure issues in existing physics-informed neural network architectures. The development of PDE learning techniques based on the $L^1$ loss and NN architectures exploiting singularities of the underlying model, such as rational NNs~\citep{boulle2020rational}, is of significant interest for the field to overcome the challenges resulting from learning PDEs with short-lived transient dynamics. Finally, we note that the analysis performed in this paper is applicable to obtain a learning rate for elliptic PDEs in any spatial dimension in $L^1$-norm. This generalizes the previous results from~\cite{boulle2021learning}, and concludes the study of elliptic and parabolic PDEs. However, Green's functions associated with hyperbolic PDEs do not admit a similar low-rank structure on well-separated domains due to singularity near characteristics lines. Thus, the theoretical analysis remains a future challenge.

\acks{N.B. was supported by the EPSRC Centre for Doctoral Training in Industrially Focused Mathematical Modelling through grant EP/L015803/1 in collaboration with Simula Research Laboratory. S.K. was supported by National Research Foundation of Korea grants NRF-2019R1A2C2002724 and NRF-2022R1A2C1003322. T.S. and A.T. were supported by the National Science Foundation grants DMS-1818757, DMS-1952757, and DMS-2045646.}




\vskip 0.2in
\bibliography{biblio}

\end{document}